\newtheorem{theorem}{Theorem}[section]
\newtheorem{lemma}[theorem]{Lemma}
\newtheorem{corollary}[theorem]{Corollary}
\theoremstyle{definition}
\newtheorem{remark}[theorem]{Remark}
\def\dfn#1{{\em #1}}
\DeclareMathOperator{\tb}{tb}
\DeclareMathOperator{\rot}{r}
\renewcommand{\sl}{\ensuremath{{\, sl}}}
\newcommand{\R}{\ensuremath{\mathbb{R}}}
\newcommand{\Z}{\ensuremath{\mathbb{Z}}}
\numberwithin{equation}{section}
\def\mathbi#1{\textbf{\em#1}}
\title{On the Legendrian and Transverse classification of Cablings}
\author{B\"{u}lent Tosun}
\address{School of Mathematics \\ Georgia Institute of Technology}
\email{btosun3@math.gatech.edu}
\urladdr{\href{http://www.math.gatech.edu/users/btosun3}{http://www.math.gatech.edu/users/btosun3}}
\begin{document}

\maketitle

\begin{abstract}

 In this note we study Legendrian and transverse knots in the knot type of a $(p,q)$-cable of a knot $\mathcal{K}$ in $S^3$. We give two structural theorems that describe when the $(p,q)$- cable of a {\em Legendrian simple} knot type $\mathcal{K}$ is also {\em Legendrian simple}.  
 
\end{abstract}

%%%%%%%%%%%%%%%%%%%%%%%%%%%%%%%%%%%%%%%%%%%%
 \section{Introduction}
%%%%%%%%%%%%%%%%%%%%%%%%%%%%%%%%%%%%%%%%%%%%
Legendrian and transverse knots are not just natural objects of study in contact $3$--manifolds but also important in capturing the geometry and topology of underlying contact structure. For example, tight vs. overtwisted dichtomy is a result of having a bound on the classical invariants $\tb(L)$ and $\rot(L)$ associated to a Legendrian knot $L$ in tight contact structures, see \cite{{BakerEtnyreVanHornMorris10},{be},{el}}. A similar statement is true for transverse knots in a given knot type and  for their unique classical invariant, self-linking number $\sl$. Hence, one wants to better understand the classification of Legendrian and transverse knots. In particular, one naturally  wonder if $\tb$ and $\rot$ (respectively $\sl$) determine the Legendrian (respectively transverse) isotopy class completely. Such a knot type is called Legendrian (respectively transverse) simple and  non-simple otherwise. This problem has been worked out on some nice class of knots \cite{{ef},{eh2}, {EtnyreNgVertesi}} and under certain topological operation on certain knot types ~\cite{{eh1},{EtnyreHonda03}}. In this paper we continue the study of cabling begun in~\cite{eh1}.

\subsection{Cabling.} Recall the $(p,q)$-cable of a knot type $\mathcal K$, denoted $\mathcal K_{(p,q)}$, is the knot type obtained by taking the isotopy class of a $(p,q)$-curve on the boundary of a tubular neighborhood of a representative of $\mathcal{K}$ (where $p$ denotes the meridional winding and $q$ denotes longitudinal winding). We will also denote this $(p,q)$-curve by the fraction $\frac qp$.

 In \cite {eh1}, Etnyre and Honda studied the Legendrian and transverse classification of cables of a knot in $(S^3,\xi_{std})$ that satisfy a property called the \textit{uniform thickness property} (UTP). They proved that $\mathcal{K}_{(p,q)}$ is Legendrian simple if $\mathcal{K}$ is Legendrian simple and uniformly thick. The UTP is, for example, satisfied by negative torus knots \cite{eh1} and is known to be preserved under cabling operation \cite{eh1}, \cite{LaFountain1}. On the other hand, the unknot and positive torus knots \cite{{eh1},{EtnyreLaFTosun}} are some examples of non-uniformly thick knot types. Indeed, by using the fact that the $(2,3)$-torus knot is not uniformly thick, Etnyre and Honda exhibit one of the first examples of a transversely non-simple knot type, the $(2,3)$-cable of the $(2,3)$-torus knot ($\it{cf.}$ \cite {bm}). Finally, in \cite{EtnyreLaFTosun}, Legendrian and transverse cables of positive torus knots were completely classified, using, in part, the results in this paper.

{\em Aim:} In this paper we study Legendrian and transverse simplicity for cables of a knot type $\mathcal K$ which is not necessarily uniformly thick. This assumption led us to study two knot invariants the \dfn{contact width} and the \dfn{lower width}, the first of which was already introduced and studied in \cite{eh1}. %What folows essentially says ``sufficiently positive and sufficiently negative" %cables of a Legendrian simple knot type are Legendrian simple. 

\subsection{The contact width and sufficiently positive cables.} Given a tight contact manifold $(M,\xi)$, let $\mathcal{K}$ be a topological knot type and  $\mathcal L(\mathcal K)$ be the set of Legendrian isotopy classes of $\mathcal K$. As the Thurston-Bennequin invariant of a knot $L$ in $\mathcal L(\mathcal K)$ is bounded above \cite{{be},{el}}, we may then define the maximal Thurston-Bennequin number of a knot type $\mathcal K$ to be

\[
\overline{ \tb} (\mathcal {K}) =\max \{\tb(L) |\ L \in {\mathcal L} (\mathcal K) \}.
\]

\smallskip

The contact width of a knot type is given by
\[   
\omega(\mathcal K)=\sup\frac{1}{slope(\Gamma_{\partial (S^1 \times D^2)})},
\]
where the supremum is taken over all $S^1 \times D^2\hookrightarrow S^3$ representing $\mathcal K$ with $\partial (S^1 \times D^2)$ convex. In order to make  sense of slopes of homotopically non trivial curves on $\partial (S^1 \times D^2)$ we identify $\partial (S^1 \times D^2)=\mathbb {R}^2/\mathbb {Z}^2$ where the meridian has slope $0$ and the well-defined longitude (as $\mathcal K$ is in $S^3$) has slope $\infty$.

Our first main result can now be stated as follows.

\begin{theorem}\label{thm:positive}
If $\mathcal{K}$ is Legendrian simple and $\omega (\mathcal{K})\in \mathbb{Z}$. Then its $(p,q)$-cable, $\mathcal{K}_{(p,q)}$, is also Legendrian simple and admits a classification in terms of the classification of $\mathcal{K}$, provided $\frac{p}{q}>\omega (\mathcal{K})$. Moreover the maximal Thurston-Bennequin invariant is $$\overline{ \tb} (\mathcal {K}_{(p,q)}) = pq - | \overline{ \tb}(\mathcal {K}) \bullet \frac{p}{q} |,$$ and the set of rotation numbers associated to $L \in \mathcal K_{(p,q)} $ with  $ \tb (L) = \overline{ \tb} (\mathcal K_{(p,q)})$ is 
\[
\rot (L) = \{q\cdot \rot(K) |\ K \in {\mathcal L} (\mathcal K)\ , \mbox{}  \tb (K) = \overline{\tb} (\mathcal K) \}
\]
 If $K\in\mathcal L(\mathcal K)$ is a non-destabilizable with $ \tb(K)=n<\overline{ \tb}(K)$, then there is  non-destabilizable $L$ in $\mathcal L(\mathcal K_{(p,q)})$ with $ \tb(L)=pq-| \frac{1}{n} \bullet \frac{p}{q} |$ and the set of rotation numbers associated to non-destabilizable $L \in \mathcal K_{(p,q)} $ with  $ \tb (L) =pq-| \frac{1}{n} \bullet \frac{p}{q} | $ is 
\[
\rot (L) = \{q\cdot \rot(K) |\ K \in {\mathcal L} (\mathcal K)\ , \  \tb (K) = n \}. 
\]
\end{theorem}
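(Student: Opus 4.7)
The plan is to follow the Etnyre--Honda paradigm from \cite{eh1}: every Legendrian $L\in\mathcal{L}(\mathcal{K}_{(p,q)})$ will be placed as a Legendrian ruling on the convex boundary of a solid-torus representative $V$ of $\mathcal{K}$, and the classification then reduces to that of the Legendrian core of $V$ via Legendrian simplicity of $\mathcal{K}$. The first step is a thickening argument: given $L$, I would start with its standard contact neighborhood $N(L)$ and thicken it through nested convex tori to a solid torus $V$ in the knot type of $\mathcal{K}$. The hypothesis $p/q>\omega(\mathcal{K})$ ensures that $p/q$ exceeds every reciprocal dividing slope achievable on the boundary of a convex solid-torus representative of $\mathcal{K}$, and combined with $\omega(\mathcal{K})\in\mathbb{Z}$ this forces the thickening to terminate with $\partial V$ of dividing slope $1/\omega(\mathcal{K})$. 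After standard edge-rounding, $L$ is Legendrian isotopic to a ruling of slope $p/q$ on $\partial V$.

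The second step is to read off the classical invariants of $L$ from the ruling picture. The Thurston--Bennequin number is obtained from the geometric intersection of the ruling with the dividing set on $\partial V$, corrected by the framing difference $pq$ between the torus framing of $\mathcal{K}_{(p,q)}$ and its Seifert framing in $S^3$; this yields the claimed formula $\tb(L)=pq-|\overline{\tb}(\mathcal{K})\bullet(p/q)|$. For the rotation number, the homology class of the ruling on $\partial V$ splits as $q$ times the core class plus $p$ times the meridian; the meridian bounds a disk in $V$ and contributes nothing to the rotation number, so $\rot(L)=q\cdot\rot(K)$, where $K$ is the Legendrian core of $V$.

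The third step uses Legendrian simplicity of $\mathcal{K}$: the core $K$ is determined up to Legendrian isotopy by the pair $(\tb(K),\rot(K))$. When $L$ realizes $\overline{\tb}(\mathcal{K}_{(p,q)})$ the associated core $K$ must realize $\overline{\tb}(\mathcal{K})$, and $K\mapsto L$ gives a bijection on maximal-$\tb$ Legendrian classes; together with the formulas above, this proves the first half of the theorem. The non-destabilizable subcase with $\tb(K)=n<\overline{\tb}(\mathcal{K})$ is handled identically: non-destabilizability of $K$ is precisely the obstruction that halts the Step-1 thickening at dividing slope $1/n$ rather than $1/\omega(\mathcal{K})$, and substituting $1/n$ into the invariant computation of Step 2 produces the claimed values of $\tb(L)$ and $\rot(L)$.

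The main obstacle is the thickening in Step 1: showing that every $L$ can indeed be isotoped onto a convex torus of the predicted integer dividing slope, and that non-destabilizability descends precisely from the core to $L$. This is a delicate bypass-and-rounding analysis building on \cite{eh1}, and the integrality hypothesis on $\omega(\mathcal{K})$ is essential: without it one would have to contend with a sequence of ever-thicker tori whose dividing slopes merely approach a non-attained supremum, spoiling the clean bijection with $\mathcal{L}(\mathcal{K})$; this is the source of the more subtle behavior in the non-integer $\omega$ case that the paper treats separately.
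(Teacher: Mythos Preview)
Your proposal captures the paper's first two lemmas (placing $L$ on a convex torus representing $\mathcal{K}$ and reading off the invariants), but it has two substantive gaps.

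First, a minor but real error: you write that the thickening terminates at dividing slope $1/\omega(\mathcal{K})$, but the correct target is $1/\overline{\tb}(\mathcal{K})$. Since $\omega(\mathcal{K})\in\mathbb{Z}$ one has either $\omega(\mathcal{K})=\overline{\tb}(\mathcal{K})$ or $\omega(\mathcal{K})=\overline{\tb}(\mathcal{K})+1$, and in the latter case slope $1/\omega(\mathcal{K})$ is \emph{not} attained (it would give a Legendrian $K$ with $\tb(K)>\overline{\tb}(\mathcal{K})$). The paper handles these two cases separately, and the key point in both is an intersection inequality
\[
\Bigl|\tfrac{1}{s}\bullet\tfrac{p}{q}\Bigr|\ \ge\ \Bigl|\overline{\tb}(\mathcal{K})\bullet\tfrac{p}{q}\Bigr|
\]
for any admissible dividing slope $s$, with equality only at $s=1/\overline{\tb}(\mathcal{K})$. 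This inequality, not a thickening to $1/\omega(\mathcal{K})$, is what pins down $\overline{\tb}(\mathcal{K}_{(p,q)})$.

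Second, and more seriously, your proposal stops after identifying the non-destabilizable representatives and their invariants. That is not enough to conclude Legendrian simplicity: you still need (i) a destabilization lemma showing that every $L$ with $\tb(L)<\overline{\tb}(\mathcal{K}_{(p,q)})$ either destabilizes or is one of the ruling curves on a neighborhood of a non-destabilizable core, and (ii) a ``valley'' lemma showing that when two non-destabilizable representatives $L,L'$ stabilize down to the same $(\tb,\rot)$, the results are Legendrian isotopic. The paper establishes both (its Lemmas~\ref{destab or die} and~\ref{positive valley}); (i) uses the Imbalance Principle on annuli between nested tori together with the intersection inequality above, and (ii) tracks how $q$-fold stabilization of $L$ corresponds to a single stabilization of the core $K$, reducing to Legendrian simplicity of $\mathcal{K}$. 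Without these two steps you have classified the peaks of the mountain range but not shown that the valleys are unique, so Legendrian simplicity of $\mathcal{K}_{(p,q)}$ does not follow.
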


\subsection{The UTP, the lower width and sufficiently negative cables.} A knot type $\mathcal K$ is said to satisfy the uniform thickness property if the following hold:
 
 \begin{enumerate}
 
 \item $\overline  \tb(\mathcal K)=\omega(\mathcal K)$
 \item  Every embedded solid tori $S^1 \times D^2\hookrightarrow S^3$ representing $\mathcal K$ can be thickened to a standard neighborhood of a maximal $ \tb$ Legendrian knot.
 \end{enumerate}

The motivation behind this definition the following result of Etnyre and Honda
\begin{theorem}[Etnyre and Honda 2005,\cite{eh1}]\label{EH}
If $\mathcal K$ is knot type which is Legendrian simple and satisfies the UTP, then $\mathcal K_{(p,q)}$ is Legendrian simple for any $p,q$.
\end{theorem}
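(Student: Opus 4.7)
The plan is to reduce the Legendrian classification of $\mathcal K_{(p,q)}$ to that of $\mathcal K$, by using the UTP to place every Legendrian representative of the cable inside a standard contact neighborhood of some maximal-$\tb$ Legendrian representative of the companion. Once we are in such a standard neighborhood, a model calculation on convex tori controls every possible $(\tb,\rot)$ pair and every destabilization.

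Given $L\in\mathcal L(\mathcal K_{(p,q)})$, I would begin by taking a standard contact neighborhood $N(L)$ and passing to its complement, a solid torus whose core sits in the topological class $\mathcal K$. A $C^\infty$-small perturbation makes its boundary convex, and condition (2) of the UTP then thickens this solid torus to a standard neighborhood $N(K)$ of a Legendrian $K\in\mathcal L(\mathcal K)$ with $\tb(K)=\overline{\tb}(\mathcal K)$. Thus $L$ is Legendrian isotopic to a Legendrian ruling curve of slope $q/p$ on a convex torus inside $N(K)$. Within this standard solid torus, Honda's classification of tight contact structures on $S^1\times D^2$, together with the Farey-tessellation analysis of intermediate convex tori, classifies up to contact isotopy every Legendrian realization of the $(p,q)$-cable that sits in $N(K)$; this determines the realized $(\tb,\rot)$ pairs and the non-destabilizable representatives as explicit functions of $p$, $q$, and of the Legendrian class of $K$.

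To conclude, given two $L_0,L_1\in\mathcal L(\mathcal K_{(p,q)})$ with equal classical invariants, the procedure above produces maximal-$\tb$ Legendrian cores $K_0,K_1\in\mathcal L(\mathcal K)$ whose rotation numbers are forced by those of $L_0,L_1$; Legendrian simplicity of $\mathcal K$ then supplies a contact isotopy from $K_0$ to $K_1$ which extends to their standard neighborhoods, and the in-neighborhood step delivers a Legendrian isotopy from $L_0$ to $L_1$. The main obstacle, and the heart of the argument, is the thickening step: one must rule out a Legendrian representative of $\mathcal K_{(p,q)}$ being trapped inside a solid torus that cannot be upgraded to a standard neighborhood of a maximal-$\tb$ companion. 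This is precisely UTP condition (2), and its failure for knot types such as positive torus knots is what makes the classification problem subtle in general, motivating the contact-width and lower-width invariants studied in the present paper.
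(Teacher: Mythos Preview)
The paper does not give its own proof of this theorem; it is quoted as a result of Etnyre--Honda \cite{eh1}. The paper merely remarks that when $\omega(\mathcal K)=\overline{\tb}(\mathcal K)=\ell\omega(\mathcal K)$ its Theorems~\ref{thm:positive} and~\ref{thm:lower} together recover the statement, so in that sense the ``paper's proof'' is the union of the Section~3 arguments specialized to the UTP hypothesis.

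Your overall strategy---use the UTP to place any Legendrian cable inside a standard neighborhood of a maximal-$\tb$ companion, then invoke Honda's solid-torus classification and Legendrian simplicity of $\mathcal K$---is the Etnyre--Honda approach and is correct in spirit. However, your first step contains a genuine topological error: the complement $S^3\setminus N(L)$ of a neighborhood of a nontrivial $(p,q)$-cable is \emph{not} a solid torus. Its JSJ decomposition consists of the companion complement $S^3\setminus N(\mathcal K)$ glued to a Seifert-fibered cable space, so there is no ``solid torus whose core sits in the topological class $\mathcal K$'' obtained by passing to the complement of $N(L)$. The correct first move, as in the paper's Lemmas~\ref{width rot and tb} and~\ref{lower rot and tb}, is to show $t(L,\mathcal C')\le 0$ and then realize $L$ as a ruling curve or Legendrian divide on the convex boundary of some solid torus $S$ representing $\mathcal K$; only then does the UTP thicken $S$.

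Your sketch also elides the dichotomy between the case $p/q>\overline{\tb}(\mathcal K)$, where maximal-$\tb$ cables are ruling curves and $\rot(L)=q\cdot\rot(K)$, and the case $p/q<\overline{\tb}(\mathcal K)$, where they are Legendrian divides and the rotation-number formula involves both $p$ and $q$ (see Lemma~\ref{lower rot and tb}). The assertion that $\rot(K_0),\rot(K_1)$ are ``forced by those of $L_0,L_1$'' therefore needs separate justification in each regime, as does the claim that the in-neighborhood classification pins down the Legendrian isotopy class; the paper handles these through Lemmas~\ref{width max tb class}, \ref{rot det}, \ref{positive valley}, and \ref{peaks}.
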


 We say that a solid torus $S^1\times D^2$ with convex boundary representing $\mathcal K$ is \dfn{non-thickenable}, if there is no $N'$ containing $S^1\times D^2$ (whenever we discuss solid torus contained in another we assume they have the same core) with $slope(\Gamma_{N'})\neq slope(\Gamma_{N})$. Since there are knots with this property, see \cite{{eh1},{EtnyreLaFTosun}}, we define yet another invariant of a Legendrian knot, the {\em lower width}, to be
\[
\ell w(\mathcal K) = \textrm{inf}\frac{1}{\textrm{slope}(\Gamma_{\partial (S^1 \times D^2)})},
\]
where $S^1\times D^2$ ranges over all non-thickenable solid tori representing $\mathcal K$ with convex boundary.     
 
\smallskip 
Our second main result addresses classification of cables with cabling slope less than the lower width.     

\begin{theorem}\label{thm:lower}
If $\mathcal{K}$ is Legendrian simple and $\ell \omega (\mathcal{K})\in\mathbb{Z}$. Then $\mathcal{K}_{(p,q)}$ is also Legendrian simple, provided $\frac{p}{q}<\ell \omega (\mathcal{K})$. Moreover $$ \displaystyle \overline{ \tb}(\mathcal{K}_{(p,q)})=pq=\omega(\mathcal{K}_{(p,q)}),$$
and the set of rotation numbers realized by$$ \{L_{(p,q)}\in\mathcal{L}(\mathcal{K}_{(p,q)}): \tb(L)=\displaystyle\overline{ \tb}(\mathcal{K}_{(p,q)})\}$$ is $$
   \{\pm(p+q(n+r(L)):L\in\mathcal{L}(\mathcal{K}), \tb(L)=-n\}$$ where n is the integer that satisfies $$-n-1<\frac{p}{q}<-n.$$

\end{theorem}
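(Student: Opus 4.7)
My plan is to adapt the strategy of Etnyre--Honda~\cite{eh1} to the non-uniformly-thick setting, using the hypothesis $p/q<\ell\omega(\mathcal{K})$ to control which convex tori can carry Legendrian cables. First I would show that every Legendrian $L\in\mathcal{L}(\mathcal{K}_{(p,q)})$ can be Legendrian isotoped to lie as a Legendrian ruling on a convex torus $T$ bounding a solid torus neighborhood of a Legendrian representative of $\mathcal{K}$; this follows from standard convex surface theory. The lower-width hypothesis then restricts the allowable dividing slopes: after maximal thickening, either $T$ is the boundary of a standard neighborhood of a $\overline{\tb}$-Legendrian in $\mathcal{K}$, or it sits inside a non-thickenable solid torus whose boundary has dividing slope at most $1/\ell\omega(\mathcal{K})=-1/n$, where $n$ is the integer with $-n-1<p/q<-n$.

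Because $p/q<\ell\omega(\mathcal{K})$, the cable slope $q/p$ can always be realized as the slope of a convex torus contained in such a solid torus (this is the Farey-graph content of the hypothesis). On such a torus, $L$ appears as a Legendrian ruling of slope $q/p$, whose contact framing coincides with the torus framing. Since the torus framing of the $(p,q)$-cable differs from the Seifert framing by $pq$, this gives $\tb(L)=pq$, attaining the upper bound and proving $\overline{\tb}(\mathcal{K}_{(p,q)})=pq$; the equality with $\omega(\mathcal{K}_{(p,q)})$ follows from the classification of tight contact structures on the resulting solid torus neighborhoods of $L$. The rotation number I would then compute via the relative Euler class: for a ruling of slope $q/p$ on a convex torus inside a non-thickenable solid torus with dividing slope $-1/n$ whose core is a Legendrian $K$ with $\tb(K)=-n$, one obtains $\rot(L)=\pm(p+q(n+\rot(K)))$, yielding the stated formula after ranging over all such cores.

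Legendrian simplicity I would then deduce by a standard stabilization argument: given $L_1,L_2\in\mathcal{L}(\mathcal{K}_{(p,q)})$ with matching $\tb$ and $\rot$, stabilize each until it sits on a convex torus around a Legendrian core of $\mathcal{K}$, so that the classical invariants of the two cores must agree; by Legendrian simplicity of $\mathcal{K}$ the cores are Legendrian isotopic, and Honda's classification of Legendrian rulings on convex tori forces the ruling positions to agree as well, so $L_1$ and $L_2$ are Legendrian isotopic. The main technical obstacle I expect is in the second step: verifying that when $T$ sits inside a non-thickenable solid torus, no hidden bypass across $T$ destabilizes $L$ in a way that alters the underlying Legendrian core type. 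One must carefully analyze bypasses on both sides of $T$ and invoke the non-thickenability hypothesis to rule out the problematic thickening bypasses, thereby guaranteeing the clean destabilization/stabilization picture needed to close out the classification.
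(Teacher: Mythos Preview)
Your outline follows the Etnyre--Honda template, but several steps are either misidentified or glossed over in ways that matter.

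First, there is a ruling/divide confusion at the heart of your $\overline{\tb}$ computation. When the dividing slope of a convex torus equals $q/p$, the curves with zero twisting relative to the torus framing are the Legendrian \emph{divides}, not the ruling curves; ruling curves necessarily cross the dividing set and have strictly negative twisting. So the maximal-$\tb$ representatives you want are Legendrian divides on a slope-$q/p$ torus, not rulings. This error propagates into your rotation computation: you describe the max-$\tb$ $L$ as a ruling on a torus of dividing slope $-1/n$, but the correct picture (and the one actually used in the paper) places $L$ as a divide on a slope-$q/p$ torus sitting inside a thickened torus with boundary slopes $-1/(n-1)$ and $-1/n$, and the rotation formula comes from the relative Euler class of the two basic slices on either side.

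Second, the assertion $\omega(\mathcal{K}_{(p,q)})=pq$ is not a routine consequence of the solid-torus classification; this is the most delicate step. The paper's argument (following Claim~4.2 of \cite{eh1}) supposes a solid torus $N_{(p,q)}$ with boundary slope $s>0$ in the $\mathcal{C}'$ framing, thickens it across an $\infty$-sloped annulus to a region $R\cong T^2\times[1,2]$ with boundary tori parallel to $\partial N(\mathcal{K})$, and then computes the Poincar\'e dual of the relative Euler class on $R$ to find a forced sign-mixing between the two basic slices. That sign-mixing is incompatible with $R$ sitting inside a solid torus of maximal thickness, yielding a contradiction. Nothing in your sketch supplies this mechanism.

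Third, your Legendrian simplicity argument inverts the paper's logic. The paper does \emph{not} stabilize two given $L_1,L_2$ onto common cores; instead it (i) classifies max-$\tb$ representatives directly via the solid-torus classification, a contactomorphism-extension argument using Eliashberg's theorem, and a pre-Lagrangian torus to produce the actual Legendrian isotopy between divides; (ii) shows every non-maximal $\tb$ representative destabilizes via the Imbalance Principle; and (iii) proves that appropriate stabilizations of distinct max-$\tb$ peaks coincide by exhibiting both as ruling curves on the standard neighborhood of a common stabilized core of $\mathcal{K}$. Your ``stabilize until they sit on a torus'' step does not by itself force the cores to have equal invariants, and you would still need the pre-Lagrangian argument (or an equivalent) to pass from ``same torus'' to ``Legendrian isotopic divides.''
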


 It is not difficult to show for any knot type $\mathcal K$ the inequality $\overline{ \tb}(\mathcal{K})\leq\omega (\mathcal{K})\leq \overline{ \tb}(\mathcal{K})+1$ holds. Now, if $\mathcal K$ is the unknot, then $\omega (\mathcal{K})=\displaystyle \overline{ \tb}(\mathcal{K})+1=0$ (since $\displaystyle \overline{ \tb}(\mathcal{K})=-1$ and complement of the unknot
in $S^3$ is the neighborhood of an unknot again). Also note that $\ell\omega(\mathcal K)=\infty$. Hence, in the case that $\mathcal K$ is the unknot in Theorem ~ \ref{thm:positive} and ~\ref{thm:lower} we get the following corollary which was originally proved by Etnyre and Honda in \cite{eh2}.

\begin{corollary}
Torus knots are Legendrian, and hence transversely simple. 
\end{corollary}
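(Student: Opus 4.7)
\medskip

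\noindent\textbf{Proof plan.} The plan is to apply Theorems~\ref{thm:positive} and~\ref{thm:lower} with $\mathcal{K}=U$ the unknot. Three ingredients suffice: (i) by Eliashberg--Fraser the unknot is Legendrian simple with $\overline{\tb}(U)=-1$ and its rotation spectrum at each $\tb$-value is explicitly known; (ii) $\omega(U)=0\in\mathbb{Z}$, since the complement of a standard neighborhood of $U$ is itself a solid torus, as observed in the paragraph preceding the corollary; and (iii) every convex solid torus representing $U$ thickens further, so there are no non-thickenable representatives of $U$ and hence $\ell\omega(U)=\infty$.

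Every non-trivial torus knot arises as $U_{(p,q)}$ with $\gcd(|p|,|q|)=1$ and $|p|,|q|\geq 2$. When $p/q>0=\omega(U)$, Theorem~\ref{thm:positive} applies directly to $U$ and yields Legendrian simplicity of $U_{(p,q)}$ together with the stated formulas for $\overline{\tb}$ and the rotation spectrum at maximal $\tb$, all computed from the known Legendrian data of the unknot. When $p/q<0$ one would like to invoke Theorem~\ref{thm:lower} (any slope is less than $\ell\omega(U)=\infty$), but the literal hypothesis $\ell\omega\in\mathbb{Z}$ is not met. The cleanest remedy is to use the mirror-image contactomorphism of $(S^3,\xi_{std})$: the standard tight contact structure on $S^3$ is isomorphic to its conjugate, which gives a contactomorphism sending $U_{(p,q)}$ to $U_{(p,-q)}$ and preserving Legendrian isotopy classes, so the negative-slope case reduces to the positive-slope case just handled.

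Transverse simplicity is a formal consequence of Legendrian simplicity in $(S^3,\xi_{std})$: the negative-stabilization / transverse-pushoff correspondence identifies transverse classes with Legendrian classes modulo negative stabilization, reducing the transverse classification of a Legendrian simple knot type to the single invariant $\sl=\tb-\rot$. The only mildly delicate point in this plan is the $\ell\omega=\infty$ technicality, which the mirror argument sidesteps entirely; alternatively one can re-examine the proof of Theorem~\ref{thm:lower} and verify that its conclusion persists whenever $\mathcal{K}$ has no non-thickenable representatives, regardless of whether $\ell\omega(\mathcal{K})$ is finite.
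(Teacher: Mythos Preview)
Your overall plan coincides with the paper's: take $\mathcal{K}=U$ the unknot, use $\omega(U)=0\in\mathbb{Z}$ and $\ell\omega(U)=\infty$, and invoke Theorems~\ref{thm:positive} and~\ref{thm:lower} according to the sign of $p/q$. The paper does exactly this in the paragraph preceding the corollary, tacitly treating $\ell\omega=\infty$ as a degenerate instance of Theorem~\ref{thm:lower}.

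Your mirror argument for the case $p/q<0$, however, does not work. There is no contactomorphism of $(S^3,\xi_{std})$ taking $U_{(p,q)}$ to $U_{(p,-q)}$. Any diffeomorphism preserving the unoriented plane field $\xi_{std}$ preserves the volume form $\alpha\wedge d\alpha$ up to a positive function and is therefore orientation-preserving on $S^3$; it cannot realize the topological mirror. (Passing to the conjugate co-orientation, as you suggest, leaves the plane field and hence every Legendrian submanifold literally unchanged.) Concretely, a contactomorphism must preserve $\overline{\tb}$, yet $\overline{\tb}$ of the $(2,3)$-torus knot is $1$ while $\overline{\tb}$ of its mirror $(2,-3)$ is $-6$; positive and negative torus knots have genuinely different Legendrian classifications and require separate arguments.

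Your alternative route is the correct one, and it is exactly what the paper is implicitly doing. In the proofs of Lemmas~\ref{lower rot and tb} through~\ref{peaks}, the hypothesis $\ell\omega(\mathcal{K})\in\mathbb{Z}$ enters only through the ability to thicken any solid torus representing $\mathcal{K}$ to one whose boundary slope lies past $q/p$; when $\mathcal{K}$ has no non-thickenable tori at all this holds for every $p/q$, and the arguments go through verbatim. So drop the mirror paragraph and present this observation as the actual treatment of negative torus knots.
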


Moreover, if $\omega(\mathcal K)=\overline{\tb}(\mathcal K)= \ell\omega(\mathcal K)\in \mathbb{Z}$, then $\mathcal K$ is uniformly thick and our Theorems \ref{thm:positive} and \ref{thm:lower} recover Theorem~\ref{EH} of Etnyre and Honda above.
 
 \smallskip
 
\subsection{Idea of the proof and outline} The main idea is to show under the assumptions of each of the theorem above, one understands solid tori in a given knot type well enough to classify maximal $\tb$ representatives of the cabled knot type and can always find bypass disks and hence destabilize a Legendrian knot with mon-maximal $\tb$. The necessary assumptions in the theorems was mainly motivated from the work of Etnyre and Honda in ~\cite{eh1}.  We begin, in Section $2$, by collecting the necessary definitions, tools and facts about convex surface theory \cite{{Colin97},{gi1}, {h1}} and about the classification of Legendrian and transverse knots \cite{eh2}. With these definitions in place, we conclude, in Section $3$, with the proof of Theorem ~ \ref{thm:positive} and Theorem ~ \ref{thm:lower}.    

\smallskip

\textit {Acknowledgement.}
  The author is grateful to his advisor \mbox{John Etnyre} for his continous support and guidence. The author would like to thank Douglas LaFountain, Vera Vertesi and Lenny Ng for helpful discussions. The author partially supported during the course of this work by T\"{U}B\.{I}TAK, the Scientific and Technological Research Council of Turkey and NSF Grant DMS-0804820. 
%%%%%%%%%%%%%%%%%%%%%%%%%%%%%%%%%%%%%%%%%%%%
\section{Preliminaries}\label{sec:pre}
%%%%%%%%%%%%%%%%%%%%%%%%%%%%%%%%%%%%%%%%%%%%

 In this section we will give  basic definitions and the necessary background material which will be used in the rest of the paper. 

\subsection{Convex surfaces, bypasses and the Farey tessellation}
Recall a surface $\Sigma$ in a contact manifold $(M,\xi)$ is
\dfn{convex} if it has a neighborhood $\Sigma\times I$, where
$I=(-\epsilon, \epsilon)$ is some interval, and $\xi$ is $I$-invariant
in this neighborhood. Any closed surface can be $C^\infty$-perturbed
to be convex. Moreover if $L$ is a Legendrian knot on $\Sigma$ for
which the contact framing is non-positive with respect to the framing
given by $\Sigma$, then $\Sigma$ may be perturbed in a $C^0$ fashion
near $L$, but fixing $L$, and then again in a $C^\infty$ fashion away
from $L$ so that $\Sigma$ is convex.

Given a convex surface $\Sigma$ with $I$-invariant neighborhood, let
$\Gamma_\Sigma \subset \Sigma$ be the multicurve where $\xi$ is
tangent to the $I$ factor. This is called the \dfn{dividing set} of
$\Sigma.$ If $\Sigma$ is oriented it is easy to see that
$\Sigma\setminus \Gamma=\Sigma_+\cup \Sigma_-$ where $\xi$ is
positively transverse to the $I$ factor along $\Sigma_+$ and
negatively transverse along $\Sigma_-$. If $L$ is a Legendrian curve
on a $\Sigma$ then the framing of $L$ given by the contact planes,
relative to the framing coming from $\Sigma$, is given by $-\frac 12
(L\cdot \Gamma)$. Moreover if $L=\partial \Sigma$ then the rotation
number of $L$ is given by $\rot(L)=\chi(\Sigma_+)-\chi(\Sigma_-)$.

\subsubsection{Convex tori}
A convex torus $T$ is said to be in \dfn{standard form} if $T$ can be
identified with $\R^2/\Z^2$ so that $\Gamma_T$ consists of $2n$
horizontal curves (note $\Gamma_T$ will always have an even number of
curves and we can choose a parameterization to make them horizontal) and
the characteristic foliations consists of $2n$ vertical lines of
singularities ($n$ lines of sources and $n$ lines of sinks) and the
rest of the foliation is by non-singular lines of slope $s$. See Figure~\ref{fig:ct}.

 \begin{figure}[h!]
\begin{center}
  \includegraphics[width=4cm]{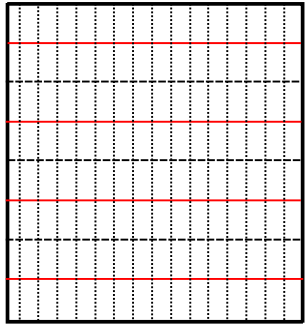}
 \caption{Standard convex tori. Legendrian divides are dashed. Legendrian rulings are dotted vertical, i.e. non-singular lines of slope $\infty$.}
  \label{fig:ct}
\end{center}
\end{figure}
 
The lines of singularities are called \dfn{Legendrian divides} and the
other curves are called \dfn{ruling curves}. We notice that the Giroux
Flexibility Theorem allows us to $C^0$ isotope any convex torus into
standard form.
 
\subsubsection{Bypasses and tori}
Let $\Sigma$ be a convex surface and $\alpha$ a Legendrian arc in
$\Sigma$ that intersects the dividing curves $\Gamma_\Sigma$ in 3
points $p_1,p_2,p_3$ (where $p_1, p_3$ are the end points of the
arc). Then a \dfn{bypass for $\Sigma$ (along $\alpha$)}, %\dfn{seeFigure~\ref{fig:ct},}
is a convex disk $D$ with Legendrian boundary
such that
\begin{enumerate}
\item $D\cap \Sigma=\alpha,$
\item $tb(\partial D)=-1,$
\item $\partial D= \alpha\cup \beta,$
\item $\alpha\cap\beta=\{p_1,p_3\}$ are corners of $D$ and elliptic
singularities of $D_\xi.$
\end{enumerate}
The bypass attachment operation is the basic unit of isotopy of surfaces and will be crucial in our proofs. It is given
in the following theorem.
\begin{theorem}[Honda 2000, \cite{h1}]
 Let $\Sigma$ be a convex surface, $D$ a bypass for $\Sigma$ along vertical $\alpha$ in $\Sigma$ (Figure~\ref{fig:ct}), then there exists a neighborhood of $\Sigma\cup D\subset M$ diffeomorphic to $\Sigma\times [0,1]$, such that $\Sigma=\Sigma_{0}$, $\Sigma_{1}$ are convex, $\Sigma\times [0,\epsilon]$ is $I$--invariant and $\Gamma_{\Sigma}$ is related to $\Gamma_{\Sigma_{1}}$ as in Figure~\ref{fig:ba}. 
\end{theorem}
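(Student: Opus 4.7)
The plan is to build an explicit product neighborhood of $\Sigma\cup D$ equipped with a compatible contact structure, and then read the new dividing set off the characteristic foliation on the pushed-off copy $\Sigma_1$.

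First, take the $I$-invariant neighborhood $\Sigma\times(-\epsilon,\epsilon)$ of $\Sigma=\Sigma\times\{0\}$ and thicken $D$ inside $M$. Because $D$ meets $\Sigma$ only along the Legendrian arc $\alpha$, after smoothing corners the union $\Sigma\cup D$ has a regular neighborhood homeomorphic to $\Sigma\times[0,1]$ with $\Sigma=\Sigma_{0}$ as the lower boundary; the task is to realize this diffeomorphism so that $\Sigma_{1}$ is also convex and to identify its dividing set.

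Next, I would use a standard local model. By the contact neighborhood theorem together with the Giroux Flexibility Theorem, the germ of $\xi$ near a convex disk with Legendrian boundary is determined by the characteristic foliation, and hence by the dividing set. So it suffices to fix a single explicit tight contact ball containing a half-disk that plays the role of $D$: a half-disk with $\tb(\partial D)=-1$, one elliptic singularity on its dividing arc, and prescribed characteristic foliation along $\alpha$. Gluing this model onto the $I$-invariant neighborhood of $\Sigma$ along a strip around $\alpha$ produces a contactomorphic copy of the abstract neighborhood of $\Sigma\cup D$, with a well-defined product structure $\Sigma\times[0,1]$ in which $\Sigma\times[0,\epsilon]$ remains $I$-invariant.

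Finally, I would perturb the topological upper boundary $\Sigma_{1}$ to be convex using Giroux flexibility, and compute $\Gamma_{\Sigma_{1}}$ directly in the model. Away from a small disk around $\alpha$ the surface $\Sigma_{1}$ is a $C^{\infty}$-small push-off of $\Sigma$, so the dividing set is unchanged there. Inside that disk, traversing the bypass reconnects the three dividing arcs of $\Gamma_{\Sigma}$ meeting $\alpha$ at $p_{1},p_{2},p_{3}$ into the pattern shown in Figure~\ref{fig:ba}, essentially because the middle puncture $p_{2}$ is pushed past $p_{1}$ and $p_{3}$ as one crosses $D$.

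The main obstacle will be the explicit local analysis: one must write down a tight contact structure on the model half-ball whose characteristic foliation restricts correctly to both $D$ and the strip around $\alpha$, and then verify that the resulting push-off $\Sigma_{1}$ is indeed convex with the claimed dividing set. Once this is done for a single standard example, Giroux flexibility and the uniqueness of tight contact structures on the ball upgrade the statement to the general convex $\Sigma$.
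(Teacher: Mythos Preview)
The paper does not prove this theorem; it is quoted as a background result from Honda's classification paper \cite{h1} and no argument is given beyond the statement and Figure~\ref{fig:ba}. So there is nothing in the paper to compare your proposal against.

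That said, your sketch is essentially the standard proof from Honda's original paper: reduce to a local model near $\alpha\cup D$ via the contact neighborhood theorem and Giroux flexibility, build an explicit contact structure on a half-ball containing the bypass half-disk, and read off the dividing set on the pushed-off surface. The one place your outline is a bit loose is the claim that ``traversing the bypass reconnects the three dividing arcs \ldots essentially because the middle puncture $p_2$ is pushed past $p_1$ and $p_3$''; in the actual computation one writes down a concrete contact form on the model (or, equivalently, tracks the signs of the elliptic and hyperbolic singularities on $D$) and sees the specific reconnection pattern of Figure~\ref{fig:ba} rather than arguing heuristically. If you want to make this rigorous you should either carry out that explicit model computation or cite \cite[Lemma~3.12]{h1} directly, as the present paper does.
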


\begin{figure}[h!]
\begin{center}
  \includegraphics[width=7cm]{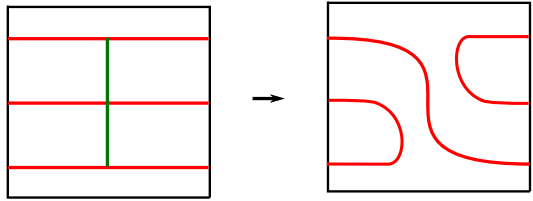}
 \caption{A piece of original surface $\Sigma$ with bypass arc $\alpha$, on the left. The surface $\Sigma_{1}$ after isotoping $\Sigma$ across $D$, on the right.}
  \label{fig:ba}
\end{center}
\end{figure}

A surface $\Sigma$ locally separates the ambient manifold. If a bypass
is contained in the (local) piece of $M\setminus \Sigma$ that has
$\Sigma$ as its oriented boundary then we say the bypass will be
attached to the back of $\Sigma$ otherwise we say it is attached to
the back of $\Sigma$.

When a bypass is attached to a torus $T$ then either the dividing
curves do not change and their number increases by two, or decreases by
two, or the slope of the dividing curves changes. The slope of the
dividing curves can change only when there are two dividing
curves. If the bypass is
attached to $T$ along a ruling curve then either the number of
dividing curves decreases by two or the slope of the dividing curves
changes. To understand the change in slope we need the following.  Let
$\mathbb{D}$ be the unit disk in $\R^2.$ Recall the \dfn{Farey
  tessellation} of $\mathbb{D}$ is constructed as follows.  Label the
point $(1,0)$ on $\partial \mathbb{D}$ by $0=\frac01$ and the point
$(-1,0)$ with $\infty=\frac10.$ Now join them by a geodesic.  If two
points $\frac{p}{q},$ $\frac{p'}{q'}$ on $\partial \mathbb{D}$ with
non-negative $y$-coordinate have been labeled then label the point on
$\partial\mathbb{D}$ half way between them (with non-negative
$y$-coordinate) by $\frac{p+p'}{q+q'}.$ Then connect this point to
$\frac{p}{q}$ and to $\frac{p'}{q'}$ by a hyperbolic
geodesic. Continue this until all positive fractions have been
assigned to points on $\partial \mathbb{D}$ with non-negative
$y$-coordinates. Now repeat this process for the points on $\partial
\mathbb{D}$ with non-positive $y$-coordinate except start with
$\infty=\frac{-1}{0}.$ See Figure~\ref{fig:bf}.

 \begin{figure}[h!]
\begin{center}
  \includegraphics[width=11cm]{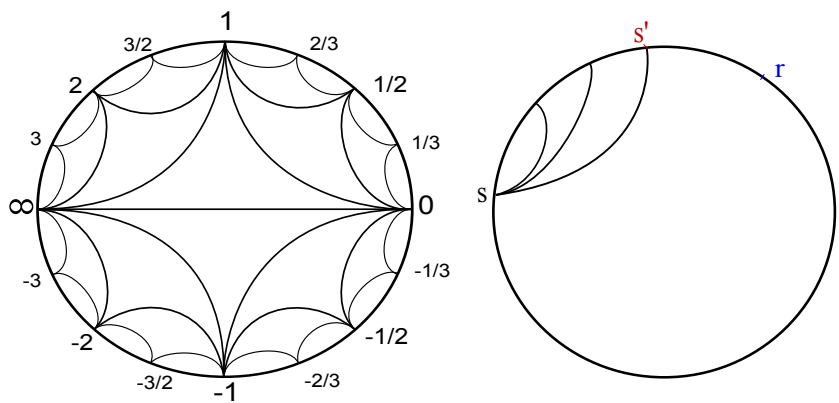}
 \caption{The Farey tessellation, on the left. Schematic of the change in the dividing slope from $s$ to $s'$ after bypass attachment along a Legendrian ruling curve of slope $r$ on the Farey tessellation, on the right.}
  \label{fig:bf}
\end{center}
\end{figure}

The key result we need to know about the Farey tessellation is given
in the following theorem. See Figure~\ref{fig:bf}.
\begin{theorem}[Honda 2000, \cite{h1}]\label{bpaferry}
Let $T$ be a convex torus in standard form with $|\Gamma_{T}|=2,$
dividing slope $s$ and ruling slope $r\not=s.$ Let $D$ be a bypass for
$T$ attached to the front of $T$ along a ruling curve. Let $T'$ be the
torus obtained from $T$ by attaching the bypass $D.$ Then
$|\Gamma_{T'}|=2$ and the dividing slope $s'$ of $\Gamma_{T'}$ is
determined as follows: let $[r,s]$ be the arc on $\partial\mathbb{D}$
running from $r$ counterclockwise to $s,$ then $s'$ is the point in
$[r,s]$ closest to $r$ with an edge to $s.$

If the bypass is attached to the back of $T$ then the same algorithm
works except one uses the interval $[s,r]$ on
$\partial\mathbb{D}$. \qed
\end{theorem}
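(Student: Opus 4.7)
The plan is to reduce the statement to a canonical configuration by a change of coordinates, then execute the bypass move directly. The Farey tessellation is $SL(2,\mathbb{Z})$-equivariant, because an edge between $\frac{p}{q}$ and $\frac{p'}{q'}$ exists precisely when the primitive vectors $(q,p)$ and $(q',p')$ form an integral basis of $\mathbb{Z}^2$, a condition preserved by $SL(2,\mathbb{Z})$. The same group acts on $T\cong \mathbb{R}^2/\mathbb{Z}^2$ and sends convex tori in standard form to convex tori in standard form, carrying dividing and ruling slopes to their images under the M\"obius action. Hence I would apply $A\in SL(2,\mathbb{Z})$ with $A(s)=\infty$ and reduce to the case where $\Gamma_T$ consists of two vertical circles.

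Under this reduction, the Farey neighbors of $s=\infty$ are exactly the integers, so the claim becomes: if the ruling slope is $A(r)=\tfrac{p}{q}$ with $\gcd(p,q)=1$ and $q>0$, then after attaching a bypass to the front along a ruling arc the new dividing slope is the unique integer $n$ lying in the Farey arc $[A(r),\infty]$ closest to $A(r)$, namely $n=\lceil p/q\rceil$. I would verify this by an explicit local model. Represent $\alpha$ as a Legendrian ruling arc meeting $\Gamma_T$ in three consecutive points, take a bigon neighborhood $\Sigma\times I$ of $T\cup D$, and apply the local bypass picture from the preceding theorem (Figure~\ref{fig:ba}) to read off the dividing set of the parallel copy $T_1=T'$.

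The output of this local computation is a collection of arcs on $T'$; I would then argue the arcs close up into two parallel simple closed curves of the asserted slope. This is done by cutting $T$ along $\alpha$ and tracking the endpoints of the modified dividing set under the identifications: the count of intersections of the new dividing curves with a longitude of slope $\infty$ and with $\alpha$ forces the new primitive class $(q',p')$ to satisfy $|qp'-q'p|=1$, i.e.\ to share a Farey edge with both $s$ and $r$, and the orientation of the counterclockwise arc from $r$ to $s$ pins down which of the two possibilities occurs.

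The main obstacle is the last step: verifying globally that the dividing set after the isotopy really consists of two parallel curves of slope $s'$ rather than some more complicated multicurve, and then matching $s'$ with the correct Farey vertex. This is where Giroux's classification of dividing sets on $T^2$ (any dividing set is $2n$ parallel essential curves) combined with the Farey-edge intersection condition $|qp'-q'p|=1$ does the work, provided one has been careful about the sign conventions determined by attaching to the front versus the back of $T$.
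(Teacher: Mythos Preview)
The paper does not prove this theorem: it is quoted from Honda~\cite{h1} and closed with a \qed, so there is no argument in the paper to compare your attempt against. Your overall strategy---exploit $SL(2,\mathbb Z)$-equivariance of the Farey tessellation to normalize $s=\infty$, then read off the new slope from the local bypass picture---is the standard one and is essentially how Honda proves it in~\cite{h1}.

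That said, your final step contains a real error. You claim that intersection counts force $|qp'-q'p|=1$, i.e.\ that the new slope $s'$ shares a Farey edge with the ruling slope $r$ as well as with $s$, leaving only two candidates to be distinguished by the front/back sign. This is false. Take $s=\infty$ and $r=2/5$: the theorem (and your own formula $s'=\lceil p/q\rceil$) gives $s'=1$, yet $|5\cdot 1-1\cdot 2|=3$, so $1$ and $2/5$ are not Farey-adjacent. In fact no integer is Farey-adjacent to $2/5$, so your ``two possibilities'' scheme produces no candidate at all whenever $r$ and $s$ are not themselves Farey neighbors. The statement of the theorem only asserts an edge between $s'$ and $s$; the relation of $s'$ to $r$ is a ``closest point on the arc'' condition, not an adjacency condition. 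To pin down which integer $s'$ actually is, you must trace how the two dividing arcs reconnect through the bypass region in the explicit local picture---this is where the floor/ceiling distinction and the front/back sign genuinely enter---rather than appealing to a global intersection count with the ruling curve.
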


\subsubsection{The Imbalance Principle}
As we see that bypasses are useful in changing dividing curves on a
surface we mention a standard way to try to find them called the
Imbalance Principle. Suppose that $\Sigma$ and $\Sigma'$ are two
disjoint convex surfaces and $A$ is a convex annulus whose interior is
disjoint from $\Sigma$ and $\Sigma'$ but its boundary is Legendrian
with one component on each surface. If $|\Gamma_\Sigma\cdot \partial
A|>|\Gamma_{\Sigma'}\cdot A|$ then there will be a dividing curve on
$A$ that cuts a disk off of $A$ that has part of its boundary on
$\Sigma$. It is now easy to use the Giroux Flexibility Theorem to show
that there is a bypass for $\Sigma$ on $A$.

\subsubsection{Discretization of Isotopy}\label{sssec:discritize}
We will frequently need to analyze what happens to the contact
geometry when we have a topological isotopy between two convex
surfaces $\Sigma$ and $\Sigma'$. This can be done by the technique of
\dfn{Isotopy Discretization} \cite{Colin97} (see also
\cite{eh2} for its use in studying Legendrian knots). Given
an isotopy between $\Sigma$ and $\Sigma'$ one can find a sequence of
convex surfaces $\Sigma_1=\Sigma, \Sigma_2, \ldots, \Sigma_n=\Sigma'$
such that
\begin{enumerate}
\item all the $\Sigma_i$ are convex and
\item $\Sigma_i$ and $\Sigma_{i+1}$ are disjoint and $\Sigma_{i+1}$ is
obtained from $\Sigma_i$ by a bypass attachment.
\end{enumerate}
Thus if one is trying to understand how the contact geometry of
$M\setminus \Sigma$ and $M\setminus \Sigma'$ relate, one just needs to
analyze how the contact geometry of the pieces of $M\setminus
\Sigma_i$ changes under bypass attachment. In particular, many
arguments can be reduced from understanding a general isotopy to
understanding an isotopy between two surfaces that cobound a product
region.

There is also a relative version of Isotopy Discretization where
$\Sigma$ and $\Sigma'$ are convex surfaces with Legendrian boundary
consisting of ruling curves on a convex torus. If $\partial
\Sigma=\partial \Sigma'$ and there is a topological isotopy of
$\Sigma$ to $\Sigma'$ relative to the boundary then we can find a
discrete isotopy as described above. (Note that during the discrete
isotopy the boundary of the surface is not fixed but is allowed to
move among the ruling curves on the convex torus. One could 
slightly rephrase item (2) in the above definition of a discretized isotopy to keep the boundary fixed, but we find it more natural to allow the boundary to move even though the original isotopy is relative to the boundary.)

\subsection{Standard neighborhood, transverse knots and stable simplicity.} 
Given a Legendrian knot $L$, a \dfn{standard neighborhood} of $L$ is a
solid torus $N$ that has convex boundary with two dividing curves of
slope $1/\tb(L)$ (and of course we will usually take $\partial N$ to
be a convex torus in standard form). Conversely given any such solid
torus it is a standard neighborhood of a unique Legendrian knot ($\it{cf.}$ \cite{Kanda}).

 One may understand stabilizations and destabilizations of a Legendrian
knot $L$ in terms of the standard neighborhood. Specifically, inside
the standard neighborhood $N$ of $L$, $L$ can be positively stabilized
to $S_+(L)$, or negatively stabilized to $S_-(L)$. Let $N_\pm$ be a
neighborhood of the stabilization of $L$ inside $N.$ As above we can
assume that $N_\pm$ has convex boundary in standard form. It will have
dividing slope $\frac{1}{\tb(L)-1}.$ Thus the region $N\setminus
N_\pm$ is diffeomorphic to $T^2\times[0,1]$ and the contact structure
on it is easily seen to be a \dfn{basic slice}, see
\cite{h1}. There are exactly two basic slices with given
dividing curves on their boundary and as there are two types of
stabilization of $L$ we see that the basic slice $N\setminus N_\pm$ is
determined by the type of stabilization done, and vice versa. Moreover
if $N$ is a standard neighborhood of $L$ then $L$ destabilizes if the
solid torus $N$ can be thickened to a solid torus $N_d$ with convex
boundary in standard form with dividing slope $\frac 1{\tb(L)+1}.$
Moreover the sign of the destabilization will be determined by the
basic slice $N_d\setminus N$. Finally, we notice that using
Theorem~\ref{bpaferry} we can destabilize $L$ by finding a bypass for
$N$ attached along a ruling curve whose slope is clockwise of
$1/(\tb(L)+1)$ (and anti-clockwise of $0$).

 Furthermore, by using this neighborhood one can talk about the \dfn{positive/negative transverse push-off}, $T_{\pm}(L)$ of a Legendrian knot $L$.  The only classical invariant of these transverse knots, the self linking number, can be computed for transverse push-offs as ($\it{cf.}$ \cite {EpsteinFuchsMeyer01}) 
\[
\sl(T_{\pm}(L))= \tb(L)\mp \rot(L). 
\]

 As in \cite {eh2} two  Legendrian knots $L$ and $L'$  are called \textit{stably isotopic} if there is some $n$ and $n'$ such that $S^n_{-}(L)$ and $S^{n'}_{-}(L')$ are Legendrian isotopic. Note that $ \tb(L)-\rot(L)= \tb(S_{-}(L))-\rot(S_{-}(L))$. A knot type $\mathcal K$ is called \textit{stably simple} if Legendrian knots in this knot type are stably isotopic. The key result that we need concerning the transverse classification of a knot type is the following theorem of Epstein, Fuchs and Meyer from \cite{EpsteinFuchsMeyer01} (also \cite {eh2} for general manifolds) which reduces the classification of transverse knots up to transverse isotopy to the classification Legendrian knots up to Legendrian isotopy and their negetaive stabilizations.

\begin{theorem}[Epstein-Fuchs-Meyer \cite{EpsteinFuchsMeyer01}, Etnyre-Honda \cite{eh2}]\label{stably simple}
A knot type $\mathcal K$ is stably simple if and only if it is transversely simple.
\end{theorem}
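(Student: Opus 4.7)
The plan is to establish the theorem via the bijection, induced by positive transverse push-off, between the set of transverse isotopy classes of knots in $\mathcal{K}$ and the set of stable isotopy classes (equivalence classes under Legendrian isotopy together with negative stabilization $S_-$) of Legendrian knots in $\mathcal{K}$. The identity $\sl(T_+(L))=\tb(L)-\rot(L)$ recorded in the excerpt shows that under this bijection the transverse invariant $\sl$ corresponds precisely to the $S_-$-invariant quantity $\tb-\rot$ on the Legendrian side, so simplicity of one classification is tautologically equivalent to simplicity of the other.

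First I would verify that $L\mapsto T_+(L)$ descends to stable classes: using the standard neighborhood description of $S_-(L)\subset N(L)$ recalled in Section~2, one checks that $T_+(S_-(L))$ is transversely isotopic to $T_+(L)$, and combining this with the obvious fact that a Legendrian isotopy produces a transverse isotopy of push-offs shows the map is well defined on stable classes. Surjectivity follows from the classical $C^0$-Legendrian approximation of transverse knots (which in fact gives a canonical preimage). Granting that the map is a bijection, the theorem is immediate: transverse simplicity asserts that the transverse side is classified by $\sl$, which via the bijection says the stable Legendrian side is classified by $\tb-\rot$, i.e.\ stable simplicity, and conversely.

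The main obstacle, and the technical heart of the theorem, is the injectivity of the push-off map modulo $S_-$: if $T_+(L)$ and $T_+(L')$ are transversely isotopic, one must show that $S_-^{n}(L)$ is Legendrian isotopic to $S_-^{n'}(L')$ for some $n,n'\ge 0$. My approach would be to realize a transverse isotopy from $T_+(L)$ to $T_+(L')$ as a generic one-parameter family $\{T_t\}_{t\in[0,1]}$ of transverse knots and to choose a sufficiently fine Legendrian approximation $L_t$ along the family. Away from a finite set of non-generic times the Legendrian isotopy class of $L_t$ is constant, so one reduces to analyzing the local model at each non-generic moment. In each such model one verifies that the Legendrian class either is unchanged or changes by a single negative stabilization/destabilization on one side; by prestabilizing both endpoints sufficiently many times all destabilizations can be cancelled against stabilizations, producing the desired outright Legendrian isotopy between $S_-^{n}(L)$ and $S_-^{n'}(L')$. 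This parametric analysis, together with standard genericity in convex surface theory, is exactly the content of the Epstein--Fuchs--Meyer argument (extended to general tight contact manifolds by Etnyre--Honda).
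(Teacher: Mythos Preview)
The paper does not supply its own proof of this theorem: it is quoted as a background result, attributed to Epstein--Fuchs--Meyer and Etnyre--Honda, and no argument is given in the text. So there is nothing in the paper to compare your proposal against.

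That said, your outline is a faithful sketch of the argument in the cited references. The bijection you describe---transverse isotopy classes versus Legendrian classes modulo negative stabilization, via the positive transverse push-off---is exactly the organizing principle, and you have correctly identified injectivity as the substantive step. One small clarification: the parametric front-projection analysis you sketch (generic one-parameter families, finitely many non-generic moments, local models yielding $S_-^{\pm 1}$) is specifically the Epstein--Fuchs--Meyer argument, which is carried out in the standard contact $\R^3$ using front diagrams. The Etnyre--Honda contribution is to prove the same bijection in an arbitrary tight contact $3$--manifold, where front projections are unavailable; their proof instead uses the characteristic foliation on an annulus swept out by the transverse isotopy and convex surface/dividing-curve arguments to produce the required common negative stabilization. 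Your final sentence conflates the two a bit, but the overall logic is sound.
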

 
\subsection{Framings and the strategy of the proofs} 
One can talk about two coordinate systems for the boundary of a neighborhood of  $\mathcal K_{(p,q)}$. In the first coordinate system, denoted by $\mathcal C$, the meridian has slope $0$ and the well-defined longitude (coming from the intersection of a Seifert surface for $\mathcal K_{(p,q)}$ with $\partial N(\mathcal K_{(p,q)})$) has slope $\infty$. In the second coordinate system, denoted $\mathcal C'$, the meridian has slope $0$ and slope $\infty$ comes from the surface $\partial N(\mathcal K)$ on which $\mathcal K_{(p,q)}$ sits. %More precisely, we take an annulus $A$ on $\partial N(\mathcal K)$ that intersects $\partial N(\mathcal K_{(p,q)})$  along its boundary with $\partial N(\mathcal K_{(p,q)})\setminus A$ has two disjoint annuli components $B_1$ and $B_2$ such that $A\cup B_i$ is isotopic to $\partial N(\mathcal K)$. Now we take $A\cap \partial N(\mathcal K)$ to have slope $\infty$.
As explained in \cite {eh1} one can relate these two framings for $\partial N(\mathcal K_{(r,s)})$ and deduce the following relation between the twisting of the contact planes along  $L_{(r,s)}$ in $\mathcal K_{(r,s)}$ with respect to either framings.

 \begin{equation}\label{twisting}
 t(L_{(p,q)}, \mathcal{C}')+pq=t(L_{(p,q)},\mathcal{ C})= \tb(L_{(p,q)}).
 \end{equation}
 
 Given two embedded closed curves $\gamma$ and $\gamma'$ on a  torus $T$ we denote their minimal intersection by $\gamma\bullet\gamma'$. If the slope of $\gamma$, respectively $\gamma'$, is $s=\frac{r}{t}$, respectively $s'=\frac{r'}{t'}$, then $$s \bullet s'=|rt'-tr'|.$$
  
  The following two lemmas are from \cite{eh1}. The first one is an easy consequence of Equation~\eqref{twisting} above.  

\begin{lemma} Let $L_{(p,q)}$ be a Legendrian knot in the knot type of $\mathcal K_{(p,q)}$.

 \begin{enumerate}
   \item If $L_{(p,q)}$ is a Legendrian divide on $\partial N(\mathcal K)$ (in which case $slope(\Gamma_{\partial N(\mathcal K)})=\frac{q}{p}$), then
    $$\tb(L_{(p,q)})=pq.$$
   
  \item If $L_{(p,q)}$ is a Legendrian ruling curve on $\partial N(\mathcal K)$ and $slope(\Gamma_{\partial N(\mathcal K)})=\frac{q'}{p'}$, then
   $$ \tb(L_{(p,q)})=pq-|\frac{p}{q}\bullet\frac{p'}{q'}|.$$

 \end{enumerate}
\end{lemma}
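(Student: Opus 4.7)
The proof is a direct application of Equation~\eqref{twisting} combined with the general relationship between contact framing and surface framing given by the dividing set, so my main task is just to compute the twisting number $t(L_{(p,q)}, \mathcal{C}')$ in each of the two cases and then transfer the result to $\tb$ through the identity $\tb(L_{(p,q)})=t(L_{(p,q)},\mathcal{C}')+pq$.

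For part (1), I would begin by noting that a Legendrian divide on a convex surface has, by the very definition of the dividing set, its contact framing equal to the surface framing coming from the convex surface itself. Applied to $L_{(p,q)}\subset \partial N(\mathcal{K})$, which sits as a Legendrian divide on the convex torus $\partial N(\mathcal{K})$, this says that the twisting of the contact planes along $L_{(p,q)}$ with respect to the framing induced by $\partial N(\mathcal{K})$ is zero; that is, $t(L_{(p,q)},\mathcal{C}')=0$. Plugging this into Equation~\eqref{twisting} immediately yields $\tb(L_{(p,q)})=pq$.

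For part (2), the Legendrian ruling $L_{(p,q)}$ has slope $q/p$ on the convex torus $\partial N(\mathcal{K})$, whose dividing set $\Gamma$ consists of two parallel curves of slope $q'/p'$. The general formula for the contact framing of a Legendrian curve on a convex surface, relative to the surface framing, is $-\tfrac{1}{2}(L\cdot\Gamma)$. Since each component of $\Gamma$ intersects a $(p,q)$-curve in $|pq'-qp'|$ points and there are two components, the geometric intersection number is $|L_{(p,q)}\cdot\Gamma|=2|pq'-qp'|$. Hence
\[
t(L_{(p,q)},\mathcal{C}')=-\tfrac{1}{2}|L_{(p,q)}\cdot\Gamma|=-|pq'-qp'|=-\Big|\tfrac{p}{q}\bullet\tfrac{p'}{q'}\Big|.
\]
Combining this with Equation~\eqref{twisting} gives $\tb(L_{(p,q)})=pq-\big|\tfrac{p}{q}\bullet\tfrac{p'}{q'}\big|$, as desired.

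There is no serious obstacle here; the only points deserving care are the sign/orientation conventions (so that part (1) recovers the expected positive value $pq$) and the bookkeeping that the convex torus $\partial N(\mathcal{K})$ is taken in standard form with exactly two dividing curves, which is what makes the factor of $2$ in $|L_{(p,q)}\cdot\Gamma|$ correct. If one instead allowed $2n$ dividing curves, the formula would pick up a factor of $n$, but the standard-form assumption used throughout Section~\ref{sec:pre} eliminates this ambiguity.
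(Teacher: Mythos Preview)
Your argument is correct and is precisely the approach the paper indicates: the lemma is stated as an easy consequence of Equation~\eqref{twisting} (the paper gives no further proof, citing \cite{eh1}), and your computation of $t(L_{(p,q)},\mathcal{C}')$ via the $-\tfrac12(L\cdot\Gamma)$ formula is exactly the intended derivation. Your remark about the implicit assumption $|\Gamma_{\partial N(\mathcal K)}|=2$ is apt and worth keeping.
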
  
  
 \begin{lemma}\label{rotation}
 Let $D$ be a convex disk contained in $N(\mathcal K)$ with Legendrian boundary on a contact isotopic copy of the convex surface $\partial N(\mathcal K)$ and $\partial\Sigma(L)$ is a convex Seifert surface of a Legendrian knot $L\in\mathcal L(\mathcal K)$ which is contained in a contact isotopic copy of $\partial N(\mathcal K)$. Then
 \begin{equation}\label{rottt}
  \rot(L_{(p,q)})=p\cdot \rot(\partial D)+q\cdot \rot(\partial\Sigma(L)).
 \end{equation}
 \end{lemma}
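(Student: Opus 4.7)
The plan is to prove the lemma by constructing an explicit convex Seifert surface $\Sigma_{(p,q)}$ for $L_{(p,q)}$ out of the given pieces $D$ and $\Sigma(L)$, and then reading off $\rot(L_{(p,q)})$ from the identity $\rot(\partial\Sigma)=\chi(\Sigma_+)-\chi(\Sigma_-)$ for a convex surface with connected Legendrian boundary, as recalled in Section~\ref{sec:pre}.

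After a contact isotopy I may assume that both copies of $\partial N(\mathcal{K})$ in the hypothesis coincide with a single convex standard torus $T$, with $L_{(p,q)}$ realized as a Legendrian ruling curve of slope $q/p$ on $T$. Choose on $T$ a family of $p$ disjoint parallel meridional Legendrian ruling curves and $q$ disjoint parallel longitudinal Legendrian ruling curves (the latter contact-isotopic to $L$). These cap off with $p$ disjoint convex parallel copies $D_1,\dots,D_p$ of $D$ inside $T$ and $q$ disjoint convex parallel copies $\Sigma_1,\dots,\Sigma_q$ of $\Sigma(L)$ outside $T$. Since the $1$-cycle $p\mu+q\lambda-L_{(p,q)}$ is null-homologous on $T$, an oriented resolution of the $pq$ transverse crossings between the $p$ meridians and $q$ longitudes yields an embedded connecting subsurface $A\subset T$ with oriented boundary $p\mu+q\lambda-L_{(p,q)}$. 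Pushing $A$ slightly into the $I$-invariant collar of $T$ and gluing it to the pieces above along the common Legendrian boundary circles produces a convex Seifert surface $\Sigma_{(p,q)}$ of $L_{(p,q)}$.

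Since all gluings occur along circles (each with $\chi=0$), the splittings $\Sigma_{(p,q),\pm}$ decompose additively across the pieces, so applying $\rot=\chi(\Sigma_+)-\chi(\Sigma_-)$ to each piece and summing yields
\[
\rot(L_{(p,q)})=p\,\rot(\partial D)+q\,\rot(L)+\bigl(\chi(A_+)-\chi(A_-)\bigr).
\]

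The main obstacle is to verify that the connecting piece contributes nothing, i.e.\ $\chi(A_+)=\chi(A_-)$. I would exploit the $I$-invariance of $\xi$ in a neighborhood of $T$: inside the collar $T\times(-\epsilon,\epsilon)$, Giroux's uniqueness of convex realizations for prescribed boundary data forces $\Gamma_A$ (after convex isotopy rel boundary) to be the preimage of $\Gamma_T$ under the vertical projection $A\to T$. A careful tally of the resulting sub-regions of $A$, which alternate in sign across each dividing arc, shows that the positive and negative regions pair up in Euler-characteristic-preserving fashion, giving the desired vanishing and hence the formula.
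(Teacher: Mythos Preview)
The paper does not prove this lemma; it is quoted from \cite{eh1} without argument. Your overall strategy---assemble a Seifert surface for $L_{(p,q)}$ from $p$ copies of $D$, $q$ copies of $\Sigma(L)$, and a connecting piece near the torus, then use additivity of $\chi(\Sigma_+)-\chi(\Sigma_-)$---is the right one and is essentially what \cite{eh1} does.

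There is, however, a genuine gap in your construction. On a convex torus in standard form the ruling curves all have a single fixed slope, so you cannot simultaneously realize $L_{(p,q)}$ (slope $q/p$), the $p$ meridians (slope $0$), and the $q$ longitudes (slope $\infty$) as Legendrian ruling curves on the \emph{same} torus $T$. Consequently the boundary of your connecting surface $A$ cannot be made Legendrian as described, and the expression $\chi(A_+)-\chi(A_-)$ is not well-posed in the sense you need. The clean fix, and the way the argument actually runs in \cite{eh1}, is to work with the relative Euler class: since $\xi$ restricted to an $I$-invariant neighborhood $T\times I$ is a trivial oriented plane bundle, choose a global nonzero section $v$; the winding of the tangent to a Legendrian curve on (any parallel copy of) $T$ relative to $v$ then depends only on its homology class in $H_1(T)$. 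Linearity in $[L_{(p,q)}]=p[\mu]+q[\lambda]$ gives the formula immediately, and the torus contribution vanishes for free because $e(\xi)$ vanishes on $T\times I$. Your final ``careful tally'' paragraph is gesturing toward this vanishing, but as written it is an assertion rather than an argument, and it rests on a construction that does not exist.
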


 %%%%%%%%%%%%%%%%%%%%%%%%%%%%%%%%%%%%%%%%%%%%
 \section{Legendrian simple cables}
%%%%%%%%%%%%%%%%%%%%%%%%%%%%%%%%%%%%%%%%%%%%
   
    In this section we give the proofs of Theorem \ref{thm:positive} and Theorem \ref{thm:lower}. But first we want to note that in the proofs we will, impicitly, use the following classical strategy, first proposed by Etnyre in ~\cite{e} and efficiently used for almost all known results concerning the clasification of Legendrian knots.  
  \begin{enumerate}
  \item Find a formula that computes $\overline{ \tb}(\mathcal{K}_{(p,q)})$ and $\rot(K)$ where $K\in\mathcal L(\mathcal{K}_{(p,q)})$ with $ \tb(K)=\overline{\tb}(\mathcal{K}_{(p,q)})$. 
  \item Classify Legendrian knots with maximal Thurston-Bennequin invariant.
	\item Show that all Legendrian representatives of $\mathcal{K}_{(p,q)}$ of non-maximal Thurston-Bennequin invariant admit destabilization or determine those that cannot be destabilized.
	\item Understand the relationship between the stabilizations of two non-destabilizable representatives of $\mathcal{K}_{(p,q)}$.
	\end{enumerate}
 
\subsection{Sufficiently positive cables.} We will work our way up to the proof of Theorem~\ref{thm:positive} through a series of lemmas.

 \begin{lemma}\label{width rot and tb}
 Under the hyphothesis of Theorem \ref{thm:positive} the maximal Thurston-Bennequin invariant is $\overline{\tb} (K_{(p,q)}) = pq - |\overline{\tb} (\mathcal K) \bullet \frac{p}{q} |$. The set of rotation numbers realized by $L \in \mathcal K_{(p,q)} $ with  $\tb (L) = \overline{\tb}$ is 
\[
\rot (L) = \{q\cdot r (K) |\ K \in {\mathcal L} (\mathcal K) ,  \displaystyle  \tb (K) = \overline{\tb} (K) \}.
\]

\end{lemma}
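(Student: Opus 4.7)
The plan is to prove the lemma by establishing realizability and an upper bound for $\overline{\tb}(\mathcal K_{(p,q)})$ separately, and then identifying the achievable rotation numbers from the structure of the maximum-$\tb$ representatives. For realizability, I would pick $K \in \mathcal L(\mathcal K)$ with $\tb(K) = \overline{\tb}(\mathcal K)$, whose standard neighborhood $N(K)$ has convex boundary with two dividing curves of slope $1/\overline{\tb}(\mathcal K)$. Since $p/q > \omega(\mathcal K) \geq \overline{\tb}(\mathcal K)$, the cable slope $q/p$ differs from the dividing slope, so Giroux flexibility lets one choose the Legendrian rulings of $\partial N(K)$ to have slope $q/p$. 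Each such ruling is a Legendrian representative $L$ of $\mathcal K_{(p,q)}$, and the ruling-curve formula stated earlier in the paper gives $\tb(L) = pq - |\overline{\tb}(\mathcal K) \bullet \frac{p}{q}|$; Lemma~\ref{rotation} then yields $\rot(L) = q \cdot \rot(K)$ once one observes that a Legendrian meridional disk in $N(K)$ has rotation number $0$, because its boundary meets the two dividing curves in only two points, splitting the disk into two half-disks of equal Euler characteristic.

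For the upper bound, take an arbitrary $L \in \mathcal L(\mathcal K_{(p,q)})$. The plan is to bring $L$ to the convex boundary of a solid torus $N$ representing $\mathcal K$ in standard form, with $L$ either a Legendrian ruling or a Legendrian divide. Letting $q'/p'$ be the dividing slope, the definition of contact width gives $p'/q' \leq \omega(\mathcal K)$; since $p/q > \omega(\mathcal K)$, the divide case $q'/p' = q/p$ is excluded, and the ruling-curve formula yields $\tb(L) = pq - |pq' - qp'|$. The upper bound then reduces to minimizing $|pq' - qp'|$ over all admissible rationals $q'/p'$. A short Farey-tessellation argument---crucially using that an integer dividing slope $n$ is only achievable when $n \leq \overline{\tb}(\mathcal K)$, since otherwise the enclosed solid torus would be a standard neighborhood of a Legendrian representative of $\mathcal K$ with $\tb = n > \overline{\tb}(\mathcal K)$---shows this minimum equals $|p - q\overline{\tb}(\mathcal K)|$, attained at the integer $\overline{\tb}(\mathcal K)$, giving the claimed formula for $\overline{\tb}(\mathcal K_{(p,q)})$.

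For the set of rotation numbers at $\tb = \overline{\tb}(\mathcal K_{(p,q)})$, the optimization step forces the ambient solid torus to have dividing slope exactly $1/\overline{\tb}(\mathcal K)$, so it is the standard neighborhood of some $K \in \mathcal L(\mathcal K)$ with $\tb(K) = \overline{\tb}(\mathcal K)$. By Legendrian simplicity of $\mathcal K$, each such $K$ is determined by $\rot(K)$, and the realizability computation above produces every value $q \cdot \rot(K)$ as the rotation of some maximum-$\tb$ representative of $\mathcal K_{(p,q)}$. Combined with the upper bound, this gives the stated set of rotation numbers.

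The step I expect to be the main obstacle is the reduction of an arbitrary $L$ to a ruling curve on the boundary of a solid torus representing $\mathcal K$ in standard form, without losing the contact-width bound on the dividing slope. This requires combining Giroux flexibility with isotopy discretization applied to a topological isotopy between a tubular neighborhood of a representative of $\mathcal K$ carrying $L$ on its boundary and one in standard form. A second subtlety, which must be handled in the Farey optimization, is the case $\omega(\mathcal K) = \overline{\tb}(\mathcal K) + 1$, in which the supremum defining $\omega$ is not attained and one must verify that no non-integer dividing slope strictly between $\overline{\tb}(\mathcal K)$ and $\omega(\mathcal K)$ yields a smaller value of $|pq' - qp'|$ than $\overline{\tb}(\mathcal K)$ itself.
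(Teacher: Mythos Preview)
Your proposal is correct and follows essentially the same route as the paper: realize the value $pq-|\overline{\tb}(\mathcal K)\bullet p/q|$ by a $(p,q)$-ruling on a standard neighborhood of a maximal-$\tb$ representative, bound $\tb$ from above by placing an arbitrary $L$ on a convex torus of dividing slope $s$ and minimizing the intersection number over admissible $s$ via a Farey argument, split into the cases $\omega(\mathcal K)=\overline{\tb}(\mathcal K)$ and $\omega(\mathcal K)=\overline{\tb}(\mathcal K)+1$, and read off rotation numbers from Lemma~\ref{rotation} with $\rot(\partial D)=0$.

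The one place you overcomplicate things is your ``main obstacle.'' Isotopy discretization is not the right tool for putting an arbitrary $L\in\mathcal L(\mathcal K_{(p,q)})$ on a convex torus; it moves between two surfaces that are \emph{already} convex. The paper's argument is more direct: working in the $\mathcal C'$ framing, one first shows $t(L,\mathcal C')<0$ for every $L$. Indeed, if some $L'$ had $t(L',\mathcal C')=0$ then $L'$ would be a Legendrian divide on a convex torus bounding a solid torus of slope $q/p$, forcing $p/q\le\omega(\mathcal K)$ and contradicting the hypothesis. Once $t(L,\mathcal C')<0$, the standard convexification result (a surface containing a Legendrian with non-positive twisting can be $C^0$-perturbed to convex, fixing the Legendrian) immediately puts $L$ on a convex torus representing $\mathcal K$, with $L$ a ruling curve. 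No discretization is needed, and the contact-width bound on the dividing slope comes for free from the definition of $\omega(\mathcal K)$. Your handling of the second subtlety---excluding the integer slope $\overline{\tb}(\mathcal K)+1$ because it would produce a Legendrian $K$ with $\tb(K)>\overline{\tb}(\mathcal K)$---matches the paper's Case~2 exactly.
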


\noindent

 \begin{proof} During the proof we will use the $\mathcal{C'}$ coordinate system. Note that $ tw(L,\mathcal{C'}) < 0$ for all $ L \in {\mathcal {L}}(K_{(p,q)})$. If not, we can assume there is $L'\in {\mathcal {L}} (K_{(p,q)})$ with $tw(L') = 0$. Then 
there exists a solid torus $S$ with $\partial S$ convex such that $L'$ is a Legendrian divide on $\partial S$ which  implies that slope of dividing set is $q/p$ when measured with respect to $\mathcal{C}$ but this contradicts the assumption that 
$\frac{p}{q} > \omega (\mathcal{K})$ 

Thus, there exists a solid torus $S$ representing $\mathcal{K}$  with $\partial S$ convex,
$L \subset \partial S$ and  the slope of $\Gamma_{\partial S}$ equal to $s.$

Recall in our Theorem \ref{thm:positive} it is assumed that $\omega(\mathcal K)\in\mathbb{Z}$. Since $\overline{ \tb}(\mathcal{K})\leq\omega (\mathcal{K})\leq \overline{ \tb}(\mathcal{K})+1$. We have either $\omega (\mathcal{K})= \overline{ \tb}(\mathcal{K})$ or $\omega (\mathcal{K})= \overline{ \tb}(\mathcal{K})+1$. Hence there are two cases to check.

\textit{Case 1.} $\omega (\mathcal{K})= \overline{ \tb}(\mathcal{K})$: We claim the following inequality holds under the assumptions of Theorem ~ \ref{thm:positive}
\begin{equation}\label{key} 
| \frac{1}{s} \bullet \frac{p}{q} | \geq | \omega (\mathcal K) \bullet \frac{p}{q} |
\end{equation}
and equality holds iff $\frac{1}{s} = \omega (\mathcal{K})$.

 To see this note that, since $\omega(\mathcal{K})\in \mathbb{Z}$ we know that on the Farey tesellesion there is an edge from $0$ to $\frac{1}{\omega (\mathcal{K})}$. Moreover, by definition of the contact width  we have, $\frac{1}{s}< \omega(\mathcal K)$. Now by using the oriented diffeomorphism of $\partial S$, we can normalize the  slopes by sending $0$ to $0$ and $\frac{1}{\omega (\mathcal K)}$ to $\infty$. Such a diffeomorphism will preserve order and hence force $q'/p' > 0$  and $\frac{1}{s'}\in[-\infty,0)$ where $q'/p'$ and $\frac{1}{s'}$ denotes the images of $q/p$ and $\frac{1}{s}$ under this diffeomorphism, respectively.

 Observe that $\frac{1}{s'}\in(-\infty,0)$ means 
\[ 
\frac{1}{s'}=m\left(\begin{array}{cc}    
0\\
-1 \end{array}\right)+n\left(\begin{array}{cc}    
1\\
0 \end{array}\right)=\left(\begin{array}{cc}    
n\\
-m \end{array}\right) 
\]

where $n,~m>0$. Hence as slope $\frac{1}{s'}=-\frac{m}{n}$. Now we easily get Inequality~\eqref{key}

\[
|\frac{1}{s}\bullet\frac{p}{q}|=|\frac{1}{s'}\bullet\frac{p'}{q'}|=|\frac{-m}{n}\bullet\frac{p'}{q'}|=|p'n+q'm|>q'=|\frac{-1}{0}\bullet\frac{p'}{q'}|=|\frac{1}{s}\bullet\frac{p}{q}|.
\]

Therefore $t(L,\mathcal C') \leq  -| \omega (\mathcal{K}) \bullet \frac{p}{q} |$. Now any Legendrian ruling on $\partial S$, where $S$ is solid torus representing $\mathcal{K}$ of maximal thickness (i.e. slope$\Gamma_{\partial S}=\frac{1}{\overline{\tb}(\mathcal K)}$), realizes the equality. By Equation \eqref{twisting} we see that
$$\overline{ \tb} (\mathcal K_{(p/q)} ) = pq - \left | \omega (\mathcal K) \bullet p / q \right|=pq-\left|\overline{ \tb}(\mathcal K)\bullet p/q\right|.$$
 
\textit{Case 2.} $\omega (\mathcal{K})= \overline{ \tb}(\mathcal{K})+1$: The same proof as in \textit{Case 1} is true when $s<\frac{1}{\overline{ \tb}(\mathcal K)+1}$ except in  Inequality~\eqref{key} equality holds iff $\frac{1}{s}=\overline{ \tb}(\mathcal K)$. When $s\in[\frac{1}{\overline{ \tb}(\mathcal K)},\frac{1}{\overline{ \tb}(\mathcal K)+1}]$, then first observe that for any such $s\in[\frac{1}{\overline{ \tb}(\mathcal K)},\frac{1}{\overline{ \tb}(\mathcal K)+1})$ we have
\begin{equation}\label{key1}
| \frac{1}{s} \bullet \frac{p}{q} | \geq | \overline{ \tb} (\mathcal K) \bullet \frac{p}{q} |.
\end{equation}  
Moreover, we cannot have $s=\frac{1}{\overline{ \tb}(\mathcal K)+1}$ as otherwise we would have $L\in \mathcal L(\mathcal K)$ with $\tb(L)=\overline{\tb}(\mathcal K)+1$.
 
 Therefore $t(L,\mathcal C') \leq -| \overline{\tb}(\mathcal{K}) \bullet \frac{p}{q} |$ and any Legendrian ruling curve of slope $q/p$ on $\partial N$, where  $N$ is solid torus representing $\mathcal K$ convex boundary and $s(\Gamma_{\partial N})=\frac{1}{\overline{ \tb}(\mathcal K)}$ will realize the equality in Inequality~\eqref{key1} 
 
  Next we compute the rotation numbers associated to this representatives. Take $L \in {\mathcal L} (\mathcal K_{(p,q)} )$ with $ \tb (L) = \overline{ \tb} (\mathcal K_{(p,q)})$. Then there exist a solid torus $S$ with convex boundary, where \mbox{slope$(\Gamma_{ \partial S}) = \frac{1}{\overline {\tb} (\mathcal K)}$} 
and $L$ is Legendrian ruling curve on $\partial S$.

Such a solid torus is a standard neighborhood of Legendrian knot \mbox{$K \in {\mathcal L} (\mathcal K)$}. Thus by Formula ~ \eqref{rottt} we have  
\[
\rot(L) = p \cdot \rot (\partial D) + q \cdot \rot(K) = q\cdot \rot(K)
\]
as $\rot(\partial D)=0$. 

 \end{proof}

 \begin{lemma} \label{width max tb class}
 
 The $L \in {\mathcal L} (\mathcal K_{(p,q)} )$ with $ \tb(L) = \overline{ \tb}$ are classified by their rotation numbers.

\end{lemma}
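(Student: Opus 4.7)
The plan is to reduce the classification to that of $\mathcal{K}$, exploiting the fact established in the proof of Lemma~\ref{width rot and tb} that every maximal $\tb$ representative of $\mathcal{K}_{(p,q)}$ lives on the boundary of a standard neighborhood of a maximal $\tb$ representative of $\mathcal K$. Specifically, inequality \eqref{key} (or \eqref{key1} in \textit{Case 2}) becomes an equality only when the dividing slope on the enclosing solid torus $S$ is $1/\overline{\tb}(\mathcal K)$, so given $L \in \mathcal L(\mathcal K_{(p,q)})$ with $\tb(L) = \overline{\tb}(\mathcal K_{(p,q)})$, we can place $L$ as a Legendrian ruling curve of slope $q/p$ on $\partial S$, with $S$ being a standard neighborhood of some Legendrian knot $K \in \mathcal L(\mathcal K)$ with $\tb(K) = \overline{\tb}(\mathcal K)$.

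Now take $L, L' \in \mathcal L(\mathcal K_{(p,q)})$ both of maximal $\tb$ with $\rot(L) = \rot(L')$. Realize them as rulings on $\partial S$ and $\partial S'$ where $S = N(K)$ and $S' = N(K')$ are standard neighborhoods of maximal $\tb$ Legendrian representatives $K, K'$ of $\mathcal K$. Formula~\eqref{rottt} gives $\rot(L) = q\cdot \rot(K)$ and $\rot(L') = q\cdot \rot(K')$, so $\rot(K) = \rot(K')$. Since $\mathcal K$ is Legendrian simple and $\tb(K) = \tb(K') = \overline{\tb}(\mathcal K)$, the knots $K$ and $K'$ are Legendrian isotopic. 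This Legendrian isotopy extends to an ambient contact isotopy of $S^3$, which we may arrange so that it carries $S'$ onto $S$: standard neighborhoods of a fixed Legendrian knot are contact isotopic, since they are parameterized uniquely by their dividing slope (and we can shrink $S'$ to fit inside any preassigned neighborhood of $K$).

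After applying this ambient isotopy, $L$ and (the image of) $L'$ are Legendrian ruling curves of slope $q/p$ on two convex tori bounding standard neighborhoods of the same Legendrian knot $K$. By Giroux Flexibility both tori can be put in standard form with the same dividing slope $1/\overline{\tb}(\mathcal K)$, and Isotopy Discretization combined with the classification of tight contact structures on $T^2 \times I$ shows the two convex tori are contact isotopic through standard form tori. Finally, on a single standard form torus the ruling curves of a fixed slope form an $S^1$-family of Legendrian isotopic knots, so $L$ and $L'$ are Legendrian isotopic.

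The main obstacle is the transition from the abstract Legendrian isotopy $K \simeq K'$ to an isotopy of ruling curves on their neighborhood boundaries; this is where one must appeal to the uniqueness of standard neighborhoods and to Giroux Flexibility to promote a topological isotopy of tori to one that respects the ruling by curves of slope $q/p$. Everything else is essentially bookkeeping with the formulas from Lemma~\ref{rotation} and the structural results already invoked.
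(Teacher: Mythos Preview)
Your argument is correct and follows the same overall strategy as the paper: place $L$ and $L'$ on boundaries of standard neighborhoods $S,S'$ of maximal $\tb$ Legendrians $K,K'\in\mathcal L(\mathcal K)$, use Formula~\eqref{rottt} and Legendrian simplicity of $\mathcal K$ to identify $K$ with $K'$, and then argue that ruling curves of slope $q/p$ on two standard neighborhoods of the same Legendrian are Legendrian isotopic.

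The only noteworthy difference is in the last step. The paper avoids invoking Isotopy Discretization or the classification on $T^2\times I$: once $K=K'$, it simply takes a smaller standard neighborhood $S''\subset S\cap S'$ with the same boundary slope $\frac{1}{\overline{\tb}(\mathcal K)}$, observes that $\overline{S\setminus S''}$ and $\overline{S'\setminus S''}$ are $I$-invariant (two dividing curves, equal slopes on both boundary tori), and hence the rulings $L$ on $\partial S$ and $L'$ on $\partial S'$ are each Legendrian isotopic to a ruling on $\partial S''$, where they agree up to an isotopy through rulings. Your route via ambient contact isotopy and uniqueness of standard neighborhoods is perfectly valid, just slightly heavier machinery for the same conclusion.
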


\begin{proof} If $L, L' \in {\mathcal L} (\mathcal K_{(p,q)})$ with $ \tb (L) =  \tb (L' ) = \overline{ \tb}$, then there exist solid tori $S$ and $S'$ which represent $K, K' \in {\mathcal L} (\mathcal K)$, respectively. Since \mbox{$tw(L,\partial S)<0$} (similarly $tw(L',\partial S')<0$) we can make $\partial S$ (and $\partial S'$) convex and $L$, $L'$ are Legendrian ruling curve on $S$ and $S'$, respectively. Moreover since $L$ and $L'$ are maximal  $\tb$ representatives there are only two dividing curves of slope $\frac{1}{\overline{\tb}(\mathcal K)}$ on  $\partial S$ and $\partial S'$.

If $\rot(L) =\rot(L')$, then by Lemma \ref{width rot and tb}, $\rot(K) =\rot(K')$ and hence 
$K$ and $K'$ are Legendrian isotopic by Legendrian  simplicity of the underlying knot type $\mathcal K$. Thus we may assume $K$ and  $K'$ are the same. Let $S$ and $S'$ be the standard neighborhoods of the $K=K'$ on which $L$ and $L'$, respectively, sit. Since $K=K'\subset S\cup S'$, there exist a solid torus $S''$ sitting inside both $S$ and $S'$ and with $\partial S''$ convex and slope($\Gamma_{\partial S''}) = \frac{1}{\overline{\tb} (\mathcal K)}$. Since $\overline{S-S''}$ and $\overline{S'-S''}$ are $I$-invariant neighborhoods, we can assume $L$, $L'$ are (slope $q/p$) Legendrian rulings on 
$\partial S''$. Finally, $L$ and $L'$ are Legendrian isotopic through the other Legendrian rulings.
\end{proof}

\begin{remark}    
If the knot type $\mathcal K$ satisfies UTP property, then %{a classification as in Figure 2 is impossible, i.e.}
 either there is single representative at maximal $\tb$ (hence has $\rot=0$) or several representatives at maximal $\tb$ which are distuinguished by  their rotation numbers. Since in our case we are dealing with the knot types that do not necessarily satisfy UTP, 
 %it is possible to have a picture as in Figure 2, though we do not know any example of it.  In other words, 
 there might be a knot type $\mathcal K$ that is Legendrian simple and has a Legendrian classification
such that some $K' \in {\mathcal L} (\mathcal K)$ has $\tb (K')=n < \overline{ \tb}$ but cannot be destabilized to 
$L$ with $ \tb (L) = \overline{ \tb}$. We note that Chongchitmate and Ng have conjectural examples in ~\cite{ChongchitmateNgPre} of this phenomena.   
\end{remark}
 
 %\begin{figure}[htbp]
	%\centering
	%	\includegraphics[width=0.60\textwidth]{2EPS}
	%\caption{Possible non-standart $( \tb,\rot)$-- Mountain range for  a knot type $\mathcal K$}
	%\label{fig:2EPS}
%\end{figure}

\begin{lemma}\label{nonmax nondestab} 
For each non-destabilizable $K \in \mathcal L(\mathcal K)$ with Thurston-Bennequin invariant  $\tb (K)=n < \overline{ \tb}$, there exists a unique, up to Legendrian isotopy, non-destabilizable $L$, a $(p,q)$-ruling curve on the standard neighborhood $N$ of $K$ with $\tb(L)=pq-| \frac{1}{n} \bullet \frac{p}{q} |$ and the set of rotation numbers associated to such $L$ is 

\[
\rot (L) = \{q\cdot \rot(K) |\ K \in {\mathcal L} (\mathcal K)\ , \  \tb (K) = n \}.
\]
\end{lemma}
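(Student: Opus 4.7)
The plan is, given a non-destabilizable $K \in \mathcal{L}(\mathcal{K})$ with $\tb(K)=n$, to take $L$ to be a Legendrian ruling curve of slope $\frac{q}{p}$ on the boundary of the standard neighborhood $N$ of $K$. Since $\partial N$ has dividing slope $\frac{1}{n}\neq \frac{q}{p}$, the Giroux Flexibility Theorem arranges the ruling slope to be $\frac{q}{p}$. The formula $\tb(L)=pq-\left|\frac{1}{n}\bullet\frac{p}{q}\right|$ then follows directly from the lemma preceding Lemma~\ref{rotation}, and $\rot(L)=q\cdot \rot(K)$ follows from Lemma~\ref{rotation} together with the observation that $\rot(\partial D)=0$ for a meridional disk $D$ of $N$.

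The heart of the proof is showing that $L$ is non-destabilizable. The plan is to suppose it destabilizes and contradict the non-destabilizability of $K$. A destabilization of $L$ produces, via Isotopy Discretization, a bypass attached to (a convex torus parallel to) $\partial N$ on the outside of $N$ along a Legendrian ruling curve of slope $\frac{q}{p}$. The hypothesis $\frac{p}{q}>\omega(\mathcal{K})\geq \overline{\tb}(\mathcal{K})>n$, combined with $\omega(\mathcal{K})\in\mathbb{Z}$, forces $\frac{q}{p}<\frac{1}{n+1}<\frac{1}{n}$, so on the Farey tessellation the point $\frac{q}{p}$ lies strictly counterclockwise of $\frac{1}{n+1}$, which in turn is Farey-adjacent to $\frac{1}{n}$. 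Theorem~\ref{bpaferry} then identifies the new dividing slope on $\partial N$ as the point in the appropriate arc from $\frac{q}{p}$ to $\frac{1}{n}$ closest to $\frac{q}{p}$ with an edge to $\frac{1}{n}$, namely $\frac{1}{n+1}$. This thickens $N$ to a solid torus of dividing slope $\frac{1}{n+1}$ and thus destabilizes $K$ to a Legendrian knot of $\tb=n+1$, contradicting the hypothesis. The main obstacle is the careful Farey-tessellation bookkeeping, and this is precisely where the strict inequality $\frac{p}{q}>\omega(\mathcal{K})$ is used.

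For uniqueness, suppose $L, L'$ are two such non-destabilizable representatives with $\rot(L)=\rot(L')$, sitting on boundaries of standard neighborhoods $N, N'$ of cores $K, K'\in\mathcal{L}(\mathcal{K})$ with $\tb(K)=\tb(K')=n$. The rotation formula yields $\rot(K)=\rot(K')$, and Legendrian simplicity of $\mathcal{K}$ gives that $K$ and $K'$ are Legendrian isotopic, so we may assume $K=K'$. Proceeding as in the proof of Lemma~\ref{width max tb class}, we find a common thinner convex solid torus $N''\subset N\cap N'$ with two dividing curves of slope $\frac{1}{n}$; the regions $\overline{N\setminus N''}$ and $\overline{N'\setminus N''}$ are $I$-invariant, so both $L$ and $L'$ can be isotoped onto $\partial N''$ as slope-$\frac{q}{p}$ ruling curves and then into each other through the one-parameter family of rulings. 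Combining this with the rotation-number computation gives exactly the claimed set of rotation numbers.
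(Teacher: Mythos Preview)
Your computations of $\tb(L)$ and $\rot(L)$ are correct and match the paper, and your uniqueness argument is fine (the paper does not spell this out separately but it follows exactly as in Lemma~\ref{width max tb class}).

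The gap is in the non-destabilizability step. You write that a destabilization of $L$ ``produces, via Isotopy Discretization, a bypass attached to (a convex torus parallel to) $\partial N$ on the outside of $N$ along a Legendrian ruling curve of slope $\frac{q}{p}$,'' and then invoke Theorem~\ref{bpaferry}. But this is not what Isotopy Discretization gives you. A destabilization of $L$ means there is some convex torus $\Sigma$ isotopic to $\partial N$, still containing $L$, on which $L$ has a bypass. The relative discretization (Section~\ref{sssec:discritize}) is applied to the \emph{annulus} $A=\partial N\setminus L$, isotoped rel boundary to $\Sigma\setminus L$; the resulting bypasses are attached along arcs in $A$ with endpoints on $L$, not along closed ruling curves on the torus. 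Theorem~\ref{bpaferry} therefore does not apply directly, and there is no reason these bypasses come only from the outside: some may come from inside $N$ as well. So the jump to ``new dividing slope $=\frac{1}{n+1}$'' is unjustified.

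The paper's argument handles this by tracking the intersection number $|\Gamma\cap L|$ through the sequence of annulus bypasses. Since the bypasses live in the complement of $L$, no single attachment can increase $|\Gamma\cap L|$. On the other hand, a nontrivial outside bypass would thicken $N$ to some slope $s$ with $\frac{1}{n+1}<s<\frac{1}{n}$ (it cannot reach $\frac{1}{n+1}$, as that would destabilize $K$), and a nontrivial inside bypass would thin $N$; in either case one checks, exactly as in Lemma~\ref{width rot and tb}, that
\[
\Bigl|\tfrac{q}{p}\bullet s\Bigr|>\Bigl|\tfrac{q}{p}\bullet \tfrac{1}{n}\Bigr|,
\]
so $|\Gamma\cap L|$ would strictly increase. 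This contradiction shows every bypass in the discretized isotopy is trivial, so $A'$ is never reached and $L$ does not destabilize. Your Farey observation that $\frac{q}{p}<\frac{1}{n+1}<\frac{1}{n}$ is exactly the input needed for this inequality, but it must be fed into the intersection-count argument rather than into Theorem~\ref{bpaferry}.
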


\begin{proof}
  Let $K \in {\mathcal L} (\mathcal K)$ be such representative. Since $\tb (K)=n < \overline{ \tb}$ we can have an $L \in {\mathcal L} (K_{(p,q)})$ which is a Legendrian ruling on $\partial N'$ where 
$N$ is the standard neighborhood of $K \in {\mathcal L} (K)$ with $s(\Gamma_{\partial N}) = \frac{1}{n}$ and $n<\overline  \tb(\mathcal K)$. Now we want to show that $L$ does not admit a destabilization. Suppose that $L$ admits a destabilization. This implies the existence of a convex torus $\Sigma$ which is (topologically) isotopic to $\partial N$ and contains $L$ and a bypass for $L$. Now isotope the annulus $A=\partial N-L$ to $A'=\Sigma-L$ relative to the boundary $L$. By the Isotopy Discretization technique in \cite[Lemma 3.10]{h3},
we know such isotopy corresponds to a sequence of bypass attachments. Now we show that all potential bypass attachment are trivial, that is dividing set of $A$ will not change and hence we cannot reach $A'$. To end this, observe that a nontrivial bypass attachment from the outside will corresponds to a thickening of $\partial N$ and it cannot be thickened to some solid torus $N'$ with $s(\Gamma_{\partial N'}) = \frac{1}{n+1}$ since this will corresponds to a destabilization of $K \in {\mathcal L} (K)$ which is impossible. Hence a nontrivial bypass attachments will give a thickening of $\partial N$ to some solid tori $N'$ with $s(\Gamma_{\partial N'}) = s$ where $\frac{1}{n+1}<s<\frac{1}{n}$. An important observation is that since bypass attachment happens in the complement of $L$, any bypass attachments to $A$ cannot increase the intersection number of the dividing set with $L$. On the other hand, as in Case 1 in Lemma~\ref{width rot and tb}, one can easily show
\begin{equation}\label{bypass}
\biggl| \frac{q}{p} \bullet s \biggr| > \biggl|\frac{q}{p} \bullet \frac{1}{n} \biggr|.
\end{equation}
Thus, bypass attachment to $A$ from the outside must increase intersection number of the dividing set with $L$. Similarly bypass attachment to $A$ from the inside would increase the intersection of the dividing set with $L$. Hence, we cannot reach $A'$ and so $L$ does not destabilize
 \end{proof}

\begin{lemma} \label{destab or die}
If $L \in {\mathcal L} ({\mathcal K}_{(p,q)})$ with $ \tb(L) < \overline{ \tb} (K_{(p,q)})$, then either $L$ admits a destabilization or $L$ is one of the non-destabilizable representative from Lemma \ref{nonmax nondestab}.

\end{lemma}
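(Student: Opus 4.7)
The overall strategy is to place $L$ on the convex boundary of a solid torus representing $\mathcal{K}$, thicken that torus maximally, and then apply the Imbalance Principle in the resulting $T^2\times I$ layer to either produce a bypass that destabilizes $L$ or force $L$ to agree with the non-destabilizable ruling of Lemma~\ref{nonmax nondestab}. First, using that $t(L,\mathcal{C}')<0$ (as in the proof of Lemma~\ref{width rot and tb}), I would realize $L$ as a Legendrian $(p,q)$-ruling on the convex boundary $\partial S$ of a solid torus $S$ representing $\mathcal{K}$ with $|\Gamma_{\partial S}|=2$ and dividing slope $\tfrac{1}{m}$. The assumption $\tb(L)<\overline{\tb}(\mathcal{K}_{(p,q)})$, together with the Farey computation in Lemma~\ref{width rot and tb}, forces $\tfrac{1}{m}$ to be strictly further from $\tfrac{p}{q}$ on the Farey graph than $\overline{\tb}(\mathcal{K})$ is.

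Next, I would thicken $S$ maximally to a non-thickenable convex solid torus $N\supset S$ sharing the same core. Since $\omega(\mathcal{K})\in\mathbb{Z}$, iterated bypass attachment combined with Theorem~\ref{bpaferry} shows that the dividing slope of $\partial N$ is $\tfrac{1}{n}$ for an integer $n\leq \overline{\tb}(\mathcal{K})$. If $n=\overline{\tb}(\mathcal{K})$, then $N$ is a standard neighborhood of some maximum-$\tb$ representative $K\in\mathcal{L}(\mathcal{K})$; choose a Legendrian $(p,q)$-ruling $L'$ on $\partial N$ and a convex annulus $A\subset\overline{N\setminus S}$ with $\partial A=L\cup L'$. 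By Lemma~\ref{width rot and tb} one has $|\Gamma_{\partial S}\cdot L|>|\Gamma_{\partial N}\cdot L'|$, so the Imbalance Principle yields a boundary-parallel dividing arc on $A$ with both endpoints on $L$. The bypass so obtained, attached to $\partial S$ along a subarc of $L$, realizes a destabilization of $L$ through the standard correspondence between such bypasses and Legendrian destabilizations of a ruling.

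If instead $n<\overline{\tb}(\mathcal{K})$, then non-thickenability of $N$ means $\partial N$ cannot be thickened to slope $\tfrac{1}{n+1}$, so the core $K$ of $N$ is a non-destabilizable Legendrian in $\mathcal{L}(\mathcal{K})$ with $\tb(K)=n$. When $S\subsetneq N$, the dividing slope $\tfrac{1}{m}$ of $\partial S$ is strictly further from $\tfrac{p}{q}$ than $\tfrac{1}{n}$ on the Farey graph, so the same Imbalance-Principle argument applied in $\overline{N\setminus S}$ produces a destabilization of $L$. Otherwise $S=N$, in which case $L$ is a Legendrian $(p,q)$-ruling on the standard neighborhood of the non-destabilizable $K$; by the uniqueness argument at the end of Lemma~\ref{width max tb class}, all such rulings are Legendrian isotopic, so $L$ is the non-destabilizable representative of Lemma~\ref{nonmax nondestab}.

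The main obstacle is cleanly converting the boundary-parallel dividing arc on $A$ (with both endpoints on the ruling $L$) into an honest Legendrian destabilization of $L$, rather than merely a bypass modifying $\partial S$. This requires tracking the sign of the resulting bypass relative to the contact framing of $L$ and arguing that the bypass cannot be trivial along $L$; in essence, one has to reverse the obstruction computation carried out inside the proof of Lemma~\ref{nonmax nondestab}, where precisely this kind of bypass was shown to be unavailable.
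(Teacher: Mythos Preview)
Your overall strategy---place $L$ on a convex torus, thicken, and apply the Imbalance Principle---matches the paper's, but there is a genuine gap in your setup. You assert that $L$ can be realized on $\partial S$ with $|\Gamma_{\partial S}|=2$ and dividing slope $\tfrac{1}{m}$ for an \emph{integer} $m$, and then that the maximal thickening $N$ has dividing slope $\tfrac{1}{n}$ for an integer $n$. Neither claim is justified. The hypothesis $\omega(\mathcal K)\in\mathbb{Z}$ constrains only the supremum over all tori; it does not force every non-thickenable solid torus to have integer-reciprocal boundary slope, and ``iterated bypass attachment combined with Theorem~\ref{bpaferry}'' cannot be invoked on a torus that, by definition of non-thickenable, admits no exterior bypasses. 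Once this assumption fails, your case split (either $n=\overline{\tb}(\mathcal K)$ or $n<\overline{\tb}(\mathcal K)$) does not cover all possibilities.

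The paper deals with exactly this missing case. When $S$ is non-thickenable with slope $s$ not of the form $\tfrac{1}{n}$, the paper \emph{shrinks} $S$ to a solid torus $N'\subset S$ with integer-reciprocal slope $\tfrac{1}{n'}$ (always possible by the classification of tight structures on solid tori), and then runs the Imbalance Principle on the annulus in $\overline{S\setminus N'}$, using the inequality $\bigl|\tfrac{q}{p}\bullet s\bigr|>\bigl|\tfrac{q}{p}\bullet \tfrac{1}{n'}\bigr|$ to force a bypass on the $L$ side. So the fix is not to push outward to an integer slope but to pull inward. Your worry in the last paragraph---turning a boundary-parallel arc on $A$ into an honest destabilization of $L$---is standard and not the real obstacle; the substantive gap is the unjustified integrality of the boundary slope of the maximal thickening.
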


\begin{proof} Given such an $L$ there is a solid torus $S$ representing $K$ with convex boundary, containing $L$ and dividing slope $s$. If $L$ does not intersect the dividing set $\Gamma_{\partial S}$ efficiently, then we can destabilize $L$ with a bypass on $\partial S$. So we now assume $L$ intersects $\Gamma_{\partial S}$ efficiently.  We know $s \neq \frac{1}{\omega(\mathcal K)}$, since $ \tb (L) < \overline{ \tb}(\mathcal K_{(p,q)})$. If $S$  has boundary slope $ \frac{1}{n}$, then either $K\in\mathcal L(\mathcal K)$ is non-destabilizable and we are in situation of Lemma \ref{nonmax nondestab} or, as the underlying knot type $\mathcal K$ is Legendrian simple, $K\in\mathcal L(\mathcal K)$ admits a destabilization and hence get a thickening of $S$. Now we can take a convex annulus $A=L\times[0,1]$ in $\partial S\times[0,1]$ and using the Imbalance Principle, we get a destabilization for $L$. Finally, suppose $s(\Gamma_{\partial S})=s$ and $S$ is non thickenable. Shrink $S$ to a solid torus $N'$ with $\partial N'$ convex and $s(\Gamma_{\partial N})=\frac{1}{n'}$. By using Equation~ \eqref{bypass} we get that  $|q'/p'\bullet s|=|q'/p'\bullet(-n/m)|=|p'n+q'm|>|p'n-q'nn'|>|p'-q'n'|=|q'/p'\bullet\frac{1}{n'}|$. Thus, we again get a destabilization for $L$. 
\end{proof}

 Finally we want to show for pairs $(\tb, \rot)$ obtained from stabilizations of multiple different non-destabilizable Legendrian knots (i.e. maximal $\tb$ representatives or Legendrian knots from Lemma~\ref{nonmax nondestab}), there is unique Legendrian with that $\tb$ and $\rot$. More precisely we prove

\begin{lemma}\label{positive valley} 
If $L, L' \in {\mathcal L} (\mathcal K_{(p,q)})$ with $ \tb (L) =  \tb (L')=\overline{\tb}(\mathcal K_{(p,q)})$ and $\rot(L)=\rot (L') + 2qn$, then $S^ {qn}_{-} (L)$ and $S^{qn}_{+} (L')$ are Legendrian isotopic. Also If $\tb(L)=\overline{\tb}(\mathcal K_{(p,q)})$ and $L'$ is from Lemma~\ref{nonmax nondestab} with $\rot(L)=\rot(L')+q(n-m)$, then and $S^ {qk}_{-}(L)$ and $S^{ql}_{+} (L')$, $k+l=n-m$, are Legendrian isotopic. 
\end{lemma}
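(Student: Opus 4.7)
The plan is to reduce the lemma to the Legendrian classification of the underlying core knots $K, K' \in \mathcal{L}(\mathcal K)$. By Lemmas~\ref{width max tb class} and~\ref{nonmax nondestab}, the maximal-$\tb$ representative $L$ can be realized as a $(p,q)$-Legendrian ruling on the boundary of a standard neighborhood $N$ of some $K \in \mathcal{L}(\mathcal K)$, and similarly $L'$ sits on $\partial N'$ over $K'$. Lemma~\ref{rotation} then gives $\rot(L) = q\cdot \rot(K)$ and $\rot(L') = q\cdot \rot(K')$, so the hypothesis $\rot(L) - \rot(L') = 2qn$ (respectively $q(n-m)$) translates directly into matching conditions on the rotation numbers of the cores.

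The main technical step I would carry out is a \emph{stabilization correspondence}: if $N_\pm \subset N$ is a smaller standard neighborhood whose core is $S_\pm(K)$, then the $(p,q)$-Legendrian ruling on $\partial N_\pm$ is Legendrian isotopic to $S^q_\pm(L)$. The $\tb$-bookkeeping is immediate from Equation~\eqref{twisting}: the dividing slope on the inner torus changes from $1/\tb(K)$ to $1/(\tb(K)\mp 1)$, and the bullet-product computation drops the $\tb$ of the ruling by exactly $q$. The rotation matches by Lemma~\ref{rotation}, since $\rot(S_-(K)) = \rot(K) - 1$ gives $q\cdot \rot(S_-(K)) = \rot(L) - q$. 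The sign of the stabilization is pinned down by the sign of the basic slice $N \setminus N_\pm$, together with uniqueness of stabilization of prescribed sign (matching $\tb$, $\rot$, and sign). Iterating this $n$ times identifies $S^{qn}_-(L)$ with the $(p,q)$-ruling on a standard neighborhood of $S^n_-(K)$, and $S^{qn}_+(L')$ with the $(p,q)$-ruling on a standard neighborhood of $S^n_+(K')$.

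Once this correspondence is in place, the lemma follows formally. In the first case, $S^n_-(K)$ and $S^n_+(K')$ share $\tb$ and, by the rotation hypothesis, share $\rot$; Legendrian simplicity of $\mathcal K$ yields a Legendrian isotopy between them, hence a contact isotopy of their standard neighborhoods, hence a Legendrian isotopy of the $(p,q)$-rulings on the boundaries. For the second case, one chooses $k, l$ so that $S^k_-(K)$ and $S^l_+(K')$ have matching classical invariants: the $\rot$-matching is precisely $k+l = n-m$, while the $\tb$-matching determines $k-l = \overline{\tb}(\mathcal K) - \tb(K')$. Legendrian simplicity of $\mathcal K$ then closes the argument exactly as before.

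The principal obstacle is the stabilization correspondence of the second paragraph, specifically the claim that the convex annulus cobounded by the two $(p,q)$-rulings inside the basic slice $N\setminus N_\pm$ decomposes into $q$ bypass attachments of consistent sign, so that the sign of the basic slice propagates correctly into the sign of the $q$-fold stabilization of the ruling. The $\tb$ and $\rot$ calculations are essentially immediate from Equation~\eqref{twisting} and Lemma~\ref{rotation}; the subtle point is matching signs.
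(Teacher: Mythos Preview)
Your proposal is correct and follows essentially the same route as the paper's proof. Both arguments reduce to the cores $K,K'\in\mathcal L(\mathcal K)$ via Lemma~\ref{rotation}, establish the ``stabilization correspondence'' $S^q_\pm(L)\cong(\text{$(p,q)$-ruling on }\partial N(S_\pm(K)))$ by analyzing the convex annulus in the basic slice $N(K)\setminus N(S_\pm(K))$, iterate, and then invoke Legendrian simplicity of $\mathcal K$ together with the argument of Lemma~\ref{width max tb class} to identify the rulings. The paper handles the annulus step by observing that the Imbalance Principle forces exactly $q$ boundary-parallel dividing arcs on the inner side and none on the outer (since $L$ has maximal $\tb$), and then asserts that these $q$ bypasses all carry the same sign; you have correctly flagged this sign-consistency as the one genuinely subtle point, and your proposed justification via the sign of the ambient basic slice is the right mechanism.
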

\begin{proof} We need to show that  $S^ {qn}_{-} (L) = S^{qn}_{+} (L')$. Observe that $L$ and $L'$ sit on standard neighborhood of $K$ and $K'$, respectively, where $K$ and $K'$ of $\mathcal L(\mathcal K)$ have maximal $ \tb$ and $\rot(K)=\rot(K')+2n$, by the assumption and Lemma~\ref{width rot and tb}. As $\mathcal K$ is Legendrian simple, we have $S^n_{-} (K)=S^n_{+} (K')$. On the other hand since $L$ is in ${\mathcal L} (\mathcal K_{(p,q)})$ is Legendrian ruling curve of slope $\frac{q}{p}$ on the standard neighborhood, say $N(K)$, of $K$ in which we have the standard neighborhood, $N(S_{(K)})$, of $S_{(K)}$. Let $L_0$ be a Legendrian ruling curve of slope $q/p$ on $\partial N(S_{(K)})$ and let $A$ be a convex annulus between $N(K)$ and $N(S_{(K)})$ with $L$ and $L'$ being its boundary. A quick computation of $\tb$ shows that the dividing set on $A$ has to have $q$-boundary parallel arcs on $L_0$ side and no boundary parallel arcs on $L$ side (as otherwise we would be able to isotop $L$ along this bypass disks and end up with a representative with less twisting and contradict with the maximality of $L$). Now the boundary parallel arcs on $L_0$ side are all either positive or all negative, giving two kinds of destabilization of $L_0$.
%we compute the maximal twisting number of $L$ with respect to $\mathcal C'$ to be  $\overline t(L,\mathcal C')=-|\overline t(\mathcal K, %\mathcal C')\bullet\frac{p}{q}|=-|p-q\overline t(\mathcal K)|$ (note that $p-q\overline t(\mathcal K)$ is positive as %$p/q>\overline{t}(\mathcal K,\mathcal C')$). Now, if we negatively stabilize $L$ $q$-times we have 
%\begin{equation}
%\begin{split}
%\overline t(S^q_{-}(L),\mathcal C')&=-|\overline t(\mathcal K, \mathcal C')\bullet\frac{p}{q}|-q\\ &=-|p-q\overline t(\mathcal K)|-q \\
%&=-|p-q(\overline t-1)|=-|(\overline t-1)\bullet\frac{p}{q}|.
%\end{split}
%\end{equation}
Therefore, we can easily conclude that $S^q_{-}(L)$ sits on a standard neighborhood of $S_{-}(K)$. In a similar way $S^q_{+}(L')$ sits on the standard neighborhood of $S_{+}(K')$. One can induct this argumennt to see that $S^{qn}_{-}(L)$ and $S^{qn}_{+}(L')$ sit on the standard neighborhoods of $S^{n}_{-}(K)=S^{n}_{+}(K')$. Using the arguments as in the proof of Lemma~\ref{width max tb class}, we conclude that $L$ and $L'$ are Legendrian isotopic.

 By using similar argument we see can see that $L, L' \in {\mathcal L} (\mathcal K_{(p,q)})$ with $tb(L)=\overline{\tb}(\mathcal K_{(p,q)})$ and  $L'$ is from Lemma~\ref{nonmax nondestab} and $\rot(L)=\rot (L')+q(n-m)$ stabilizes to same Legendrian knot. 
 
\end{proof}
\begin{proof}[Proof of Theorem \ref{thm:positive}]
  Lemma \ref{width max tb class}, Lemma \ref{nonmax nondestab} and Lemma \ref{destab or die} give a complete list of non-destabilizable Legendrian knots in $\mathcal K_{(p,q)}$ and they are all determined by $ \tb$ and $\rot$, by Lemma~\ref{positive valley} 
\end{proof}
\subsection{Sufficiently negative cables.} Now we give the  proof of Theorem \ref{thm:lower}. The proof is established through the following sequence of lemmas.
   
\begin{lemma} \label{lower rot and tb}
  If $\frac{p}{q} < \ell \omega (\mathcal{K})$ and  $\ell \omega (\mathcal{K})\in \mathbb{Z}$, then $$ \displaystyle \overline{ \tb}(\mathcal{K}_{(p,q)})=pq=\omega(\mathcal{K}_{(p,q)}). $$
   Moreover the set of rotation numbers realized by$$ \{L_{(p,q)}\in\mathcal{L}(\mathcal{K}_{(p,q)}): \tb(L)=\displaystyle\overline{ \tb}(\mathcal{K}_{(p,q)})\}$$ is $$
   \{\pm(p+q(n+r(L)):L\in\mathcal{L}(\mathcal{K}), \tb(L)=-n\}$$ where n is the integer that satisfies $$-n-1<\frac{p}{q}<-n.$$
   
 \end{lemma}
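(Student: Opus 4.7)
My plan is to proceed in parallel with Lemma \ref{width rot and tb}, but to exploit the hypothesis $p/q<\ell\omega(\mathcal{K})$ to realize the cable slope $q/p$ itself as the dividing slope of a convex solid torus representing $\mathcal{K}$; a Legendrian divide on such a torus will then be a max-$\tb$ representative of $\mathcal{K}_{(p,q)}$ with $\tb=pq$, in contrast to the Legendrian-ruling realization used in Lemma~\ref{width rot and tb}.

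For the upper bound, I would place an arbitrary $L_{(p,q)}\in\mathcal{L}(\mathcal{K}_{(p,q)})$ on the convex boundary of a solid torus $S$ representing $\mathcal{K}$ after isotopy, and observe that its twisting relative to the framing from $\partial S$ equals $-\tfrac{1}{2}|L\cdot \Gamma_{\partial S}|\leq 0$, so by \eqref{twisting} one has $\tb(L_{(p,q)})\leq pq$, with equality iff $L_{(p,q)}$ is a Legendrian divide. For the realization, I would start from a non-destabilizable $K\in\mathcal{L}(\mathcal{K})$ with $\tb(K)=-n$, where $n$ is the integer with $-n-1<p/q<-n$. The standard neighborhood $N_0$ of $K$ has dividing slope $-1/n$, and since $q/p\in(-1/n,-1/(n+1))$ every $s$ in this open interval satisfies $1/s\in(-(n+1),-n)$ and hence $1/s<-n\leq\ell\omega(\mathcal{K})$, so no non-thickenable torus obstructs such slopes. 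I would then construct a sequence of bypass attachments, each produced by the Imbalance Principle on a convex annulus between the current torus and a strict thickening it must admit, that navigates the Farey tessellation via Theorem \ref{bpaferry} from $-1/n$ down to $q/p$; on the resulting convex torus of dividing slope $q/p$ the Legendrian divide realizes $\tb(L_{(p,q)})=pq$.

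For the rotation numbers, I would apply Lemma \ref{rotation} to any max-$\tb$ representative $L_{(p,q)}$: viewing it as a Legendrian divide on $\partial N$ built over the standard neighborhood $N_0$ of some $K\in\mathcal{L}(\mathcal{K})$ with $\tb(K)=-n$, one obtains
\[
\rot(L_{(p,q)})=p\,\rot(\partial D)+q\,\rot(K),
\]
where $\partial D$ is a Legendrian meridional curve on $\partial N$. Its intersection with the dividing curves of slope $q/p$ produces a controlled number of boundary-parallel dividing arcs on the convex disk $D$, whose signs are dictated by the signs of the basic slices glued in along the bypass path (one sign per Farey step). Summing these signed contributions should reproduce exactly the two claimed families $\pm(p+q(n+r(L)))$. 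Finally, $\omega(\mathcal{K}_{(p,q)})=pq$: the inequality $\omega\geq\overline{\tb}=pq$ is automatic, and the upper-bound analysis above rules out any thickening with dividing slope exceeding $1/pq$.

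The main obstacle I foresee is the inductive construction in the second paragraph: certifying that the Farey path from $-1/n$ down to $q/p$ can be executed by genuine bypass attachments. The lower-width hypothesis is precisely what removes any non-thickenable obstruction along the open interval $(-1/n,-1/(n+1))$, but at each Farey step I must still produce an appropriate convex annulus joining the current torus to a strict thickening, and then invoke the Imbalance Principle with the correct ruling slope to extract the next bypass. Tracking the signs of these basic slices consistently along the whole path, so that the bookkeeping in the rotation computation lands exactly on $\pm(p+q(n+r(L)))$, is the companion technical piece of work.
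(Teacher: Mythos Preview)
Your proposal has a genuine gap at the most important step: the upper bound $\overline{\tb}(\mathcal K_{(p,q)})\le pq$ (equivalently, in $\mathcal C'$ coordinates, $\overline t(L_{(p,q)},\mathcal C')\le 0$) and the companion claim $\omega(\mathcal K_{(p,q)})=pq$. Your argument ``place an arbitrary $L_{(p,q)}$ on the convex boundary of a solid torus $S$ representing $\mathcal K$'' is circular: one can perturb $\partial S$ to be convex while keeping a given Legendrian $L_{(p,q)}\subset\partial S$ only when the contact framing of $L_{(p,q)}$ is already non-positive relative to $\partial S$ (see Section~\ref{sec:pre}). That hypothesis is exactly $tw(L_{(p,q)},\mathcal C')\le 0$, the thing you are trying to prove. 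Unlike in Lemma~\ref{width rot and tb}, here the slope $q/p$ \emph{is} realized by a dividing set, so there is no cheap contradiction available if one supposes $tw\ge 0$. The paper handles this by proving directly that $\omega(\mathcal K_{(p,q)},\mathcal C')=0$ via the Euler class argument of \cite[Claim~4.2]{eh1}: assuming a solid torus $N_{(p,q)}$ in the cable type with $\mathcal C'$--slope $s>0$, one thickens a convex $\infty$--sloped annulus on $\partial N_{(p,q)}$ to a region $R\cong T^2\times[1,2]$, computes $PD\,e(\xi|_R)$ from the product structure on the annulus, and finds that the basic-slice decomposition of $R$ must have mixed signs; since the lower-width hypothesis lets one embed $R$ in a solid torus of maximal thickness (which is universally tight), this is a contradiction. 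Your one-line dismissal of $\omega(\mathcal K_{(p,q)})=pq$ misses this entire mechanism.

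Two smaller points. First, your realization step is over-engineered and oriented the wrong way: passing from dividing slope $-1/n$ to $q/p\in(-1/n,-1/(n+1))$ is a \emph{shrinking} of the standard neighborhood of $K$, which is always available inside any solid torus and needs neither outside bypasses, the Imbalance Principle, nor any appeal to the lower-width hypothesis. Second, for the rotation numbers the paper does not track signs along a long Farey path as you propose; instead it places the dividing torus $T_{1.5}$ of slope $q/p$ inside a single $T^2\times[1,2]$ with boundary slopes $-\tfrac1{n-1}$ and $-\tfrac1n$ (standard neighborhoods of some $L\in\mathcal L(\mathcal K)$ and of $S_\pm(L)$), and reads off $\rot(L_{(p,q)})$ from the relative Euler class of the two resulting basic slices together with Lemma~\ref{rotation}. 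This makes the $\pm(p+q(n+r(L)))$ formula immediate and avoids the sign-bookkeeping you anticipate as an obstacle.
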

 \begin{proof}
 We will use the $\mathcal{C'}$ coordinate system. Observe that since $\frac{p}{q} < \ell \omega (\mathcal{K})$, there is a convex torus of slope $q/p$, parallel to $\partial N$, inside solid torus $N$ representing $\mathcal K$, with convex boundary. Now a Legendrian divide on this convex torus is a representative $L_{(p,q)}\in\mathcal{L}(\mathcal{K}_{(p,q)})$ with twisting number zero. Thus $\overline{t}(L_{(p,q)},\mathcal{C'})\geq 0$.
  
  For the equality it is enough to show that $\omega(\mathcal{K}_{(p,q)},\mathcal{C'})=0$ since $\overline{t}(L_{(p,q)},\mathcal{C'})\leq \omega(\mathcal{K}_{(p,q)},C')$. The proof below is essentially the same as Claim 4.2 in \cite{eh1}. The key point is showing that the knot type $\mathcal{K}_{(p,q)}$ satisfies the first condition of the UTP.
  
  Let $N_{(p,q)}$ be a solid torus representing $\mathcal K_{(p,q)}$ and has convex boundary with $s(\Gamma_{\partial(N_{(p,q)}})=s$. We want to show $s=0$. Suppose $s>0$. After thinning the solid tori $N_{(p,q)}$ we may take $s$ to be a large positive integer and $\#\Gamma_{\partial(N_{(p,q)}}=2$. We use Giroux's Flexibility Theorem, \cite{gi1}, to arrange charecteristic foliation on $\partial N_{(p,q)}$ to be in standart form with Legendrian ruling of slope $\infty$ and consider convex annulus $A$ with Legendrian boundary of slope $\infty$ on $\partial N_{(p,q)}$ such that a thickening $R=N_{(p,q)}\cup (A\times [-\epsilon,\epsilon])\cong T^2\times [1,2]$ has  $\partial R=T_{1}\cup T_{2}$ parallel to $N(\mathcal K)$, where $N(\mathcal K)$ is a solid torus representing $\mathcal K$ with convex boundary of slope $q/p$, $T_{2}$ is isotopic to $\partial N$ and $T_{1}\subset N(\mathcal K)$. Note that $\Gamma_{A}$ must consists of parallel non-seperating arcs, otherwise we can attach the bypass corresponding to boundary parallel arcs onto $\partial(N_{(p,q)})$ to increase $s$ to $\infty$ by Theorem~\ref{bpaferry}. This will result excessive twisting inside $N(\mathcal K_{(p,q)})$ and hence would result contact structure to be overtwisted. Moreover, we can take an identification of $\partial N(\mathcal K)$ so that $slope(\Gamma_{T_{1}})=-s$ and $slope(\Gamma_{T_{2}})=1$. To see this, we note that $T_{1}$ and $T_{2}$ are each obtained by gluing one half of $\partial N(\mathcal K_{(p,q)})$ to the annulus $A$ and now since $s$ is a positive integer, it is clear that $\Gamma_{T_{1}}$ is obtained from $\Gamma_{T_{2}}$ by performing $s+1$ right-handed Dehn twists.
  
  Let $N'$ be a solid torus of maximal thickness containing $R$. By ~\cite[Proposition 4.1]{h1}, such a neighborhood has exactly two universally tight contact structures. On the other hand, any tight contact structure on $R$ can be layered into two basic slices at the torus $T_{1.5}$ parallel to $T_{i}$, $i=1,2$, with $slope(\Gamma_{T_{1.5}})=\infty$ which is $q/p$ when measured with respect to $\mathcal{C}$ coordinate system. Moreover, a quick computation of the Poincare duals of the relative Euler classes for each of this basic slices shows that there are four possible tight contact structures on $R$ (two for each basic slices) which are given by $\pm(1,0)\pm (1,1-s)$ and the universally tight ones are the ones that has no mixing of sign (i.e. either $+(1,0)+(1,1-s)$ or $-(1,0)-(1,1-s)$ ). We want to determine if the tight contact structure $\xi$ we start with, has a mixing of sign or not. To end this, we compute the Euler class. Let $\gamma$ be a Legendrian ruling curve of slope $\infty$ on $A$ and let $A'=\gamma\times[-\epsilon,\epsilon]$. We easily see that the dividing set on $A'$ is made of $2s$ parallel curves (as $A'$ is $(-\epsilon,\epsilon)$--invariant), we use this to get that $<e(\xi),A'>=\chi(A'_{+})-\chi(A'_{-})=0$, this gives then $PDe(\xi)=\pm(0,1-s)$. So, there is a mixing of sign. But this cannot happen inside $N'$. Thus, $s=0$ and we get $\omega(\mathcal{K}_{(p,q)},\mathcal{C'})=0$, passing $\mathcal{C}$ coordinate system we have $\displaystyle \overline{ \tb}(\mathcal{K}_{(p,q)})=pq$.

 Now we want to compute rotation numbers of $L_{(p,q)}$ in $\mathcal L(\mathcal K_{(p,q)})$ realizing maximal Thurston-Bennequin number. Let $T^2_{1.5}=\partial N$ which contains $L_{(p,q)}$ with $ \tb(L_{(p,q)})=pq$. Since $\frac{p}{q}< \ell \omega (\mathcal{K})$, we can take a thickening of tori $T^2_{1.5}$,  $T^2 \times [1,2]$ such that boundary tori have slope \mbox{slope$(\Gamma_{T^2_{1}})=-\frac{1}{n-1}$} and slope$(\Gamma_{T^2_{1}})=-\frac{1}{n}$ where $n$ is the integer that satisfies $-n-1<\frac{p}{q}<-n$ (note that $n$ may equal to $\ell \omega (\mathcal{K})$). But now the solid tori of boundary slopes $-\frac{1}{n-1}$ and $-\frac{1}{n}$ are the standard neighborhoods of  $L$ and $S_{\pm} (L)$, respectively. We can now make the relative Euler class computation as above and then use Lemma~\ref{rotation} to get desired formula for the rotation number computation.
\end{proof}
 
\begin{lemma}\label{rot det}
 Legendrian knots with maximal $ \tb$ in  $\mathcal L(\mathcal K_{(p,q)})$ are determined by their rotation numbers.  
\end{lemma}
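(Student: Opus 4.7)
The plan is to carry out an argument parallel to Lemma \ref{width max tb class}, with Legendrian divides playing the role that Legendrian rulings did in the positive case. Fix $L, L' \in \mathcal L(\mathcal K_{(p,q)})$ with $\tb(L) = \tb(L') = pq$ and $\rot(L) = \rot(L')$. By the construction used in Lemma \ref{lower rot and tb}, each of $L, L'$ sits as a Legendrian divide on a convex torus of dividing slope $q/p$ parallel to $\partial N(\mathcal K)$, which in turn lies in a thickened layer $T^2 \times [1,2]$ with boundary slopes $-1/(n-1)$ and $-1/n$, where $n$ is the integer with $-n-1 < p/q < -n$. The outer boundary of each such layer is the convex boundary of a standard neighborhood of some Legendrian knot $K \in \mathcal L(\mathcal K)$ (respectively $K'$) with $\tb(K) = \tb(K') = -n$.

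First I would observe that the rotation number formula $\rot(L) = \pm(p + q(n + \rot(K)))$ from Lemma \ref{lower rot and tb} was obtained by computing the Poincar\'e dual of the relative Euler class of the basic slice cut off by the intermediate torus of slope $q/p$. The $\pm$ sign records which of the two basic slices in the layer is distinguished by the choice of slice at slope $q/p$. Hence the hypothesis $\rot(L) = \rot(L')$, together with injectivity of the map $K \mapsto \pm(p + q(n + \rot(K)))$ on $\mathcal L(\mathcal K)$ at fixed $\tb = -n$ (once the sign is also fixed), forces both $\rot(K) = \rot(K')$ and the agreement of the two basic slice signs.

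Next, since $\mathcal K$ is Legendrian simple and $\tb(K) = \tb(K')$, $\rot(K) = \rot(K')$, we conclude $K$ is Legendrian isotopic to $K'$; applying an ambient contact isotopy we may replace $K'$ with $K$ and arrange that both $L, L'$ lie in the complement of a single Legendrian representative of $\mathcal K$. It remains to show that inside a layer $T^2 \times [1,2]$ of fixed boundary slopes and fixed (signed) basic-slice decomposition at slope $q/p$, the two convex tori $T, T'$ carrying $L$ and $L'$ are contact isotopic, and consequently their Legendrian divides are Legendrian isotopic. This is a direct consequence of Honda's classification of tight contact structures on $T^2 \times I$, applied as in the proof of Lemma \ref{width max tb class}: shrink both $T$ and $T'$ to a common intermediate convex torus $T''$ of slope $q/p$, note that the complementary regions are $I$-invariant, and then isotope the Legendrian divide $L$ through $T''$ to $L'$.

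The main technical obstacle is the sign bookkeeping in the second step: one must verify that the relative Euler class computation from Lemma \ref{lower rot and tb} aligns the $\pm$ in the rotation number formula with the two basic slice signs attached to $K$ so that matching rotation numbers really does match both the underlying Legendrian in $\mathcal K$ and the basic slice type. Once this alignment is pinned down, the remainder is a standard convex-surface argument of the same flavor as in Lemma \ref{width max tb class}.
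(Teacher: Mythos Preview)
Your route is genuinely different from the paper's, and it has a real gap at the end.

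The paper does \emph{not} pass through the underlying knot type $\mathcal K$ here. Rather than extracting $K,K'$ from the rotation-number formula and invoking Legendrian simplicity of $\mathcal K$, it argues directly with the solid tori $N,N'$ of boundary slope $q/p$ on which $L,L'$ sit as Legendrian divides: the classification of tight contact structures on solid tori says the contactomorphism type of such an $N$ is determined by the number of positive bypasses on a meridional disk, and this in turn is read off from $\rot(L)$. One then extends the resulting contactomorphism $N\to N'$ to $S^3$ and upgrades it to a contact isotopy via Eliashberg's theorem, so that $L$ and $L'$ become Legendrian divides on one and the same convex torus. This completely bypasses the ``sign bookkeeping'' problem you correctly flag as the crux of your approach; in particular the paper's argument at this step makes no use of the Legendrian simplicity of $\mathcal K$.

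The more serious issue is your final move. The analogy with Lemma~\ref{width max tb class} fails exactly because Legendrian \emph{divides} are isolated singular curves of the characteristic foliation, whereas Legendrian \emph{rulings} sweep out the torus. Even granting that you can nest $T,T'$ over a common $T''$ with $I$-invariant complementary layers, flowing $L$ and $L'$ down to $T''$ leaves you with two \emph{distinct} Legendrian divides on a convex torus with two dividing curves, and there is no foliation of $T''$ by Legendrian curves through which to isotope one to the other. The paper resolves this with a device you do not invoke: by \cite[Corollary~4.8]{h1} the layer $T^2\times[1,2]$ contains a \emph{pre-Lagrangian} torus of slope $q/p$ whose characteristic foliation is linear, so that every leaf is Legendrian; one may then take $L$ and $L'$ to be leaves of this foliation and isotope one to the other through it. Without this pre-Lagrangian step (or an equivalent substitute), your argument does not conclude.
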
 

\begin{proof}
Let $L$ and $L'$ be two Legendrian knots in $\mathcal L(\mathcal K)$ with maximal $ \tb$ and $\rot(L)=\rot(L')$, then we have associated solid tori $N$ and $N'$ with convex boundary on which  $L$ and $L'$ sit as Legendrian divides. The classification of tight contact structures on the solid torus in \cite{{gi2},{h1}} says that the contactomorphism type of a tight contact structure on a solid torus with convex boundary is determined by the number of the positive bypasses on the meridional disk. Hence, determined by the rotation number of $L$ and $L'$, respectively, which are the same by the assumption. Thus, we get a contactomorphism $f:N\rightarrow N'$. We may extend $f$ to a contactomorphism of $S^3$ that takes $\partial N$ to $\partial N'$. Furthermore, by using Eliashberg's result in \cite{el}, there is a contact isotopy of $S^3$ that takes $\partial N$ to $\partial N'$. So we will now think $L$ and $L'$ are Legendrian divides on same solid torus, say $N$, with convex boundary.  We now want to form a Legendrian isotopy between $L$ and $L'$. To end this, we recall from Lemma ~\ref{lower rot and tb} that $\partial N$ is siting inside a thickened torus  $T^2 \times [1,2]$ such that boundary tori have  \mbox{slope$(\Gamma_{T^2_{1}})=-\frac{1}{n-1}$} and slope$(\Gamma_{T^2_{2}})=-\frac{1}{n}$. Now as the consequence of the classification of tight contact structure on thickened tori (see ~\cite[Corollary 4.8]{h1}), we know there is also a pre-Lagrangian torus, (still) denote by $\partial N$, which has linear characteristic foliation and the same boundary slope as convex torus does. Thus, we can take $L$ and $L'$ to be two leaves on this pre-Lagrangian torus. Now, $L$ and $L'$ are Legendrian isotopic through this linear characteristic foliation.
\end{proof} 
  
 \begin{lemma}\label{lower destab}
 If $L'\in\mathcal{K}_{(p,q)}$ with ${ \tb}(L')<\overline{ \tb}$, then $L'$ admits a destabilization.
 \end{lemma}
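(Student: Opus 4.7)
My plan is to adapt the strategy of Lemma~\ref{destab or die} to the present sufficiently-negative setting, exploiting the fact that the hypothesis $\frac{p}{q}<\ell\omega(\mathcal{K})$ rules out the non-destabilizable exceptions that appeared in the positive case. To begin, I would place $L'$ on the boundary of a solid torus $N$ representing $\mathcal{K}$ with $\partial N$ convex, and let $s$ denote the slope of $\Gamma_{\partial N}$ measured in the $\mathcal{K}$-framing. If $L'$ fails to meet $\Gamma_{\partial N}$ efficiently, then a standard Imbalance Principle argument applied to a thin convex annulus about $L'$ in $\partial N$ yields a bypass for $L'$ on $\partial N$ and hence a destabilization. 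Thus I may assume $L'$ is a slope-$q/p$ Legendrian ruling curve on $\partial N$, in which case Equation~\eqref{twisting} gives $\tb(L')=pq-|s\bullet(p/q)|$, and the assumption $\tb(L')<pq$ forces $s\ne p/q$.

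The remainder of the argument splits according to whether $N$ is thickenable. In the thickenable case, I would enlarge $N$ to a solid torus $N''$ with convex boundary whose dividing slope $s''$ has moved toward $p/q$ along the Farey tessellation, so that $|s''\bullet(p/q)|<|s\bullet(p/q)|$. The hypothesis $\frac{p}{q}<\ell\omega(\mathcal{K})$ guarantees that no non-thickenable barrier blocks the push toward $p/q$. Taking a convex annulus in $N''\setminus N\cong T^2\times I$ cobounding $L'$ on $\partial N$ with a slope-$q/p$ ruling curve on $\partial N''$, the resulting imbalance in intersection with the dividing sets produces, via the Imbalance Principle, a bypass for $L'$ and hence a destabilization.

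If instead $N$ is non-thickenable, then by definition $\frac{1}{s}\ge\ell\omega(\mathcal{K})>\frac{p}{q}$, which locates $s$ and $p/q$ on prescribed sides of each other in the Farey tessellation. I would then shrink $N$ to a standard neighborhood $N'$ of its Legendrian core, whose boundary has dividing slope $\frac{1}{n'}$ for some integer $n'$. The Farey computation paralleling the one around Equation~\eqref{bypass} yields $|s\bullet(p/q)|>|\tfrac{1}{n'}\bullet(p/q)|$, and applying the Imbalance Principle to a convex annulus between $L'\subset\partial N$ and a slope-$q/p$ ruling curve on $\partial N'$ once more produces a bypass for $L'$, destabilizing it.

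The main obstacle I anticipate lies in the non-thickenable subcase, where one must choose $n'$ so that $\frac{1}{n'}$ sits on the correct side of both $s$ and $p/q$ along the Farey circle to make the key inequality hold, and then verify that the integrality condition $\ell\omega(\mathcal{K})\in\mathbb{Z}$ combined with $\frac{p}{q}<\ell\omega(\mathcal{K})$ is precisely what makes such a choice possible. Once this Farey arithmetic is pinned down, the remainder of the proof is a direct application of the bypass technology developed in Section~\ref{sec:pre}.
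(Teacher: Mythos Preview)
Your proposal is correct and follows essentially the same approach as the paper: place $L'$ on a convex torus boundary, reduce to the efficient-intersection case, then split into two cases and produce a bypass via the Imbalance Principle applied to an annulus between $\partial N$ and a nearby torus with smaller intersection number with the slope-$q/p$ curve. The only organizational difference is that the paper splits cases by comparing $s$ to $\tfrac{1}{\ell\omega(\mathcal K)}$ (thickening to slope $\tfrac{1}{\ell\omega}$ in one case, and invoking the Farey inequality directly in the other) rather than by the thickenable/non-thickenable dichotomy you use, but the two decompositions lead to the same Imbalance Principle computation.
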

 
 \begin{proof}
 We can put $L'$ on a solid torus $S$ with $\partial S$ convex and \mbox{ $slope(\Gamma_{\partial S})=s$}. By the above lemma and the assumption that $\frac{p}{q}< \ell \omega (\mathcal{K})$ we can deduce that $L'$ is a Legendrian ruling on $S$ (clearly we can assume $L'$ intersects $\Gamma_{\partial S}$ efficiently otherwise destabilization is immediate) and $\frac{1}{s}\neq\ell \omega (\mathcal{K})$. If $s<\frac{1}{\ell\omega(\mathcal K)}$, then, as in Equation~\eqref{width rot and tb}, we easily see that $|q/p\bullet s|>|q/p\bullet1/\ell\omega|$. Hence,by using the Imbalance Principle, we get a destabilaztion of $L'$. If $s>\frac{1}{\ell\omega(\mathcal K)}$, then we can thicken $S$ to a solid tori $S'$ with $\partial S'$ convex and \mbox{ $slope(\Gamma_{\partial S'})=\frac{1}{\ell \omega (\mathcal{K})}$}. Hence taking a convex annulus $A$ with one boundary component on $L'$ in $\partial S\times[0,1]=\overline{S'-S}$ and applying the Imbalance Princible again we find a bypass for $L'$ which gives a destabilization for $L'$.
 \end{proof} 

\begin{lemma}\label{peaks}
  If $L^{+}_{(p,q)},L^{-}_{(p,q)}\in \mathcal L (\mathcal K_{(p,q)})$ with $ \tb(L^{+}_{(p,q)})= \tb(L^{-}_{(p,q)})$ and $\rot(L^{+}_{(p,q)})=\rot(L^{-}_{(p,q)})+2p+2qn$ (or $\rot(L^{+}_{(p,q)})=\rot(L^{-}_{(p,q)})+2kq-2p-2qn$), then $S^{p+qn}_{+}(L^{-}_{(p,q)})=S^{p+qn}_{-}(L^{+}_{(p,q)})$ (or $S^{kq-p-qn}_{+}(L^{-}_{(p,q)})=S^{kq-p-qn}_{-}(L^{+}_{(p,q)})$).
\end{lemma}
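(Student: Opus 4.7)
The proof adapts the template of Lemma \ref{positive valley} to the pre-Lagrangian torus setting of negative cables. By Lemma \ref{lower rot and tb}, each $L^\pm_{(p,q)}$ is realized as a Legendrian divide on a pre-Lagrangian torus $T^\pm$ of slope $q/p$ inside a thickened torus $T^2\times[1,2]$ with boundary slopes $-1/(n-1)$ and $-1/n$. This thickened torus decomposes as two basic slices meeting along $T^\pm$, and the sign of each basic slice is determined by $\rot(L^\pm)$ through the relative Euler class calculation carried out in the proof of Lemma \ref{lower rot and tb}.

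The next step is to track where the stabilizations land. Each positive stabilization of the Legendrian divide $L^-$ amounts to pushing it into the adjacent positive basic slice; after $|p+qn|$ positive stabilizations, $L^-$ becomes a slope-$q/p$ Legendrian ruling curve on the convex boundary torus of slope $-1/n$, which is the boundary of a standard neighborhood $N(K)$ of some $K\in\mathcal L(\mathcal K)$ with $\tb(K)=-n$. Symmetrically, $|p+qn|$ negative stabilizations carry $L^+$ to a slope-$q/p$ ruling curve on a standard neighborhood $N(K')$ of some $K'\in\mathcal L(\mathcal K)$ with $\tb(K')=-n$. The count $|p+qn|$ is forced by the intersection identity $\bigl|\frac{q}{p}\bullet\frac{-1}{n}\bigr|=|qn+p|$, which is exactly the required drop in $\tb$.

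To finish, apply Lemma \ref{rotation} to read off $\rot(K)$ and $\rot(K')$ from the rotation numbers of the two stabilized ruling curves. The hypothesis $\rot(L^+_{(p,q)})=\rot(L^-_{(p,q)})+2p+2qn$, combined with the $\pm(p+qn)$ shifts introduced by the stabilizations, forces $\rot(K)=\rot(K')$. Since $\tb(K)=\tb(K')=-n$ and $\mathcal K$ is Legendrian simple, $K$ and $K'$ are Legendrian isotopic, so we may assume $K=K'$. The two stabilized cables are therefore slope-$q/p$ ruling curves on standard neighborhoods of the same Legendrian $K$, and the nesting-and-$I$-invariance argument from the proof of Lemma \ref{width max tb class} produces the desired Legendrian isotopy. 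Case 2 follows by the same scheme, with the stabilizations driving us instead to a standard neighborhood of an appropriate further stabilization of some Legendrian in $\mathcal L(\mathcal K)$, indexed by $k$.

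The main obstacle is the sign bookkeeping: one must verify that a positive (resp.\ negative) stabilization of the cable truly crosses a positive (resp.\ negative) basic slice in the thickened torus decomposition around $T^\pm$, so that $S^{p+qn}_+(L^-)$ and $S^{p+qn}_-(L^+)$ really do land on standard neighborhoods of the same $K\in\mathcal L(\mathcal K)$ with matching rotation contribution from the meridional disk. This is settled by restricting the relative Euler class computation of Lemma \ref{lower rot and tb} to the specific basic slice layers traversed by each stabilization sequence, and then invoking Lemma \ref{rotation} to convert the Euler class data into the rotation numbers of the stabilized cables.
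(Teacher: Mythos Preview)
Your proposal is correct and follows essentially the same route as the paper's proof: realize the maximal-$\tb$ cables as Legendrian divides on a torus of slope $q/p$ inside a basic slice decomposition, push them by the appropriate number of stabilizations onto ruling curves of standard neighborhoods of Legendrian representatives of $\mathcal{K}$, and then finish with the ruling-curve isotopy argument. The paper organizes the casework slightly differently---it starts from the ruling curve $L'_{(p,q)}$ on $\partial N(L)$ (respectively $\partial N(S^k_\pm(L^\pm))$) and uses the annulus $A=L_{(p,q)}\times[1.5,2]$ to exhibit the bypasses that destabilize $L'_{(p,q)}$ in both directions back to $L^\pm_{(p,q)}$, so that the target neighborhood is fixed from the outset rather than recovered afterwards via Legendrian simplicity of $\mathcal{K}$---but the content is the same.
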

 
 \begin{proof}
There are two cases to concern based on rotation number computation in Lemma ~\ref{lower rot and tb}

\dfn{Case 1:} $L\in \mathcal L(\mathcal K)$ in Lemma ~\ref{lower rot and tb} has $\rot(L)=0$. In this case $L^{\pm}_{(p,q)}$ are the only maximal $\tb$ representatives of $\mathcal L(\mathcal K_{(p,q)})$ with $r(L^{+}_{(p,q)})=-p-qn$ and $r(L^{-}_{(p,q)})=p+qn$. Clearly by doing $-p-qn$ positive ( respectively negative) stabilization on $L^{-}_{(p,q)}$ (respectively on $L^{+}_{(p,q)}$) we end up at Legendrian knots with the same $(\tb, \rot)$ pair. We also have $L'_{(p,q)}\in\mathcal L (\mathcal K_{(p,q)})$ with $\tb(L'_{(p,q)})=\displaystyle\overline{ \tb}(\mathcal{K}_{(p,q)})\}+p+qn$ number and $\rot(L'_{(p,q)})=q\rot(L)=0$. We know by Lemma~\ref{lower destab}, such a $L'_{(p,q)}$ admits a destabilization. We want to show, these are Legendrain isotopic, i.e. $S^{-p-qn}_{+}(L^+_{(p,q)})=L'_{(p,q)}=S^{-p-qn}_{-}(L^-_{(p,q)})$. Recall that $L^{\pm}_{(p,q)}$ are the Legendrian divide on a convex torus $T_{1.5}$ with boundary slope $\frac{q}{p}$ inside $T^2\times [1,2]=N(L)- N(S_{\pm}(L))$ (See the remark at the end of the statement of  \cite[Lemma 3.8]{eh1}). Hence $L'_{(p,q)}$ is a Legendrian ruling curve of slope $\frac{q}{p}$ on the standard neighborhood $N(L)$ of $L\in \mathcal L(\mathcal K)$ with $ \tb(L)=-n$. Note that, $S^{-p-qn}_{+}(L^+_{(p,q)})$ and $S^{-p-qn}_{-}(L^-_{(p,q)})$ are also Legendrian ruling curve on $N(L)$. Hence, $L'_{(p,q)}$ is Legendrian isotopic to  $S^{-p-qn}_{+}(L^+_{(p,q)})$ and $S^{-p-qn}_{-}(L^-_{(p,q)})$  through the other ruling curves. Indeed, by taking a convex annulus $A=L_{(p,q)}\times [1.5,2]$ between $T_{1.5}$ and $N(L)$ with $\partial A$ is Legendrian curves of slope $\frac{q}{p}$ on $T_{1.5}$ and $N(L)$, we easily see $L'_{(p,q)}$ destabilizes in two ways.

\dfn{Case 2:} $L\in \mathcal L(\mathcal K)$ in Lemma ~\ref{lower rot and tb} has $\rot(L)\neq0$. In this case, $L^{\pm}_{(p,q)}\in\mathcal L(\mathcal K_{(p,q)})$ coresponds to $L^{\pm}\in \mathcal L(\mathcal K)$ where $ \tb(L^{+})= \tb(L^{-})=-n$ and $r(L^{+})\neq r(L^{-})$. Without loss genarility we can assume that $\rot(L^{-})< \rot(L^{+})$ and there is no $L^0$ with $\rot(L^{-})<\rot(L^0)< \rot(L^{+})$ , then $\rot(L^{+})- \rot(L^{-})=2k$, $k\in \mathbb{Z}_{>0}$. Thus $\rot(L^{-}_{(p,q)})=q\rot(L^{-})+p+qn$ and $\rot(L^{+}_{(p,q)})=q\rot(L^{+})-p-qn=q\rot(L^{-})-(2kq+p+qn)$. This extra depth $kq$ comes from the underlying knot type puts us precisely in the situation of Lemma ~\ref{positive valley}. Namely, the $L'_{(p,q)}$ with $\tb(L'_{(p,q)})=pq-(kq+p+qn)$ and $\rot(L'_{(p,q)})=q\rot(L^{+})+kq=q\rot(L^{-})-kq$ is the Legendrian ruling curve of slope $\frac{q}{p}$ on the standard neighborhood $S^k_{+}(L^{+})=S^k_{-}(L^{-})$ (as $\mathcal K$ is Legendrian simple). Therefore, a Legendrian isotopy through the other ruling curves gives that $L'_{(p,q)}=S^{kq-p-qn}_{+}(L^{-}_{(p,q)})=S^{kq-p-qn}_{-}(L^{+}_{(p,q)})$.
\end{proof}

\begin{proof}[Proof of Theorem \ref{thm:lower}]
  Lemma \ref{lower rot and tb} and Lemma~\ref{rot det} give a complete list of non-destabilizable Legendrian knots in $\mathcal K_{(p,q)}$ and show they are all determined by their $ \tb$ and $rot$. By Lemma~\ref{lower destab}, every $L'_{(p,q)}$ in $\mathcal L(\mathcal K_{(p,q)})$ with non-maximal $\tb$ invariant can be written as $S^{k}_{-}S^{l}_{+}(L_{(p,q)})$ for some $L^{\pm}_{(p,q)}\in\mathcal L(\mathcal K_{(p,q)})$ with maximal $\tb$. Finally, Lemma~\ref{peaks} shows any two $L^{\pm}_{(p,q)}$ with maximal $\tb$ and $\rot(L^{-}_{(p,q)})< \rot(L^{+}_{(p,q)})$ (and no $L^{0}_{(p,q)}$ with $\rot(L^{-}_{(p,q)})< \rot(L^{0}_{(p,q)})<\rot(L^{+}_{(p,q)})$ ), stabilize to same $L'_{(p,q)}$ in $\mathcal L(\mathcal K_{(p,q)})$.
\end{proof} 

\bibliographystyle{amsplain}

\end{document}